\newtheorem{theorem}{Theorem}[section]
\newtheorem{lemma}[theorem]{Lemma}  
\newtheorem{proposition}[theorem]{Proposition}
\newtheorem{conjecture}[theorem]{Conjecture}
\theoremstyle{definition}
\newtheorem{definition}[theorem]{Definition}
\newtheorem{example}[theorem]{Example}
\newtheorem{remark}[theorem]{Remark}    
\newtheorem{problem}[theorem]{Problem}
\newcommand{\Z}{\mathbf{Z}}
\newcommand{\R}{\mathbf{R}}
\newcommand{\Zp}{\mathbf{Z}_{\geq 0}}
\newcommand{\J}{\mathcal{J}}
\newcommand{\I}{\mathcal{I}}
\newcommand{\B}{\mathcal{B}}
\newcommand{\F}{\mathcal{F}}
\newcommand{\A}{\mathcal{A}}
\newcommand{\change}[1]{\textcolor{black}{#1}}
\newcommand{\changeF}[1]{\textcolor{black}{#1}}
\begin{document}
\title{A Generalized-Polymatroid Approach \\to Disjoint Common Independent Sets in Two Matroids}
\author{
Kenjiro Takazawa%
\thanks{
Hosei University,
Tokyo 184-8584, Japan.  
E-mail: {\tt takazawa@hosei.ac.jp}. 
}
~and~
Yu Yokoi%
\thanks{National Institute of Informatics, Tokyo 101-8430, Japan. 
E-mail: {\tt yokoi@nii.ac.jp}. 
}
}
\date{January 2019}
\maketitle

\begin{abstract}
In this paper,
we investigate the classes of matroid intersection admitting a solution for
the problem of partitioning the ground set $E$ into $k$ common independent sets,
where $E$ can be partitioned into $k$ independent sets in each of the
two matroids.
For this problem, we present a new approach building upon the
generalized-polymatroid
intersection theorem. We exhibit that this approach offers alternative
proofs and
unified \change{understanding} of previous results
showing that the problem has a solution
for the intersection of two laminar matroids and that of two matroids
without $(k+1)$-spanned elements.
Moreover, we
newly show that the intersection of a laminar matroid and a matroid
without $(k+1)$-spanned elements
admits a solution.
We also construct an example of a transversal matroid which is incompatible with
the generalized-polymatroid approach.
\end{abstract}

\section{Introduction}

For two matroids with a common ground set, 
the problem of 
partitioning the ground set into common independent sets is a classical topic in discrete mathematics. 
That is, 
extending \change{K\H{o}nig's celebrated} bipartite edge-coloring theorem \cite{Konig16}, 
described below, into general matroid intersection has been \change{studied extensively}. 
\changeF{In this paper, we call a family $\mathcal{P}$ of subsets of $E$ a {\em partition} of $E$ if the members of $\mathcal{P}$ are pairwise disjoint and their union  is $E$.
(We allow empty subsets in $\mathcal{P}$.)}
\begin{theorem}[K\H{o}nig \cite{Konig16}]\label{thm:Konig}
For a bipartite graph $G$ and a positive integer $k$, 
the edge set of $G$ 
can be partitioned into $k$ matchings 
if and only if the maximum degree of the vertices of $G$ is at most $k$.
\end{theorem}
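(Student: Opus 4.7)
The plan is a two-step reduction: first embed $G$ in a $k$-regular bipartite multigraph $\tilde{G}$, then peel off perfect matchings one at a time using Hall's theorem. Necessity is immediate, since every matching uses at most one edge at each vertex, so at least the maximum degree many matchings are required to cover the edges incident to a vertex achieving that degree.

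For sufficiency, assume the bipartition is $(A,B)$ and the maximum degree is at most $k$. First I would pad the smaller side with isolated vertices so that $|A| = |B|$. Then, as long as some vertex $a \in A$ satisfies $\deg(a) < k$, the equality $\sum_{a \in A} \deg(a) = \sum_{b \in B} \deg(b) < k|A| = k|B|$ forces the existence of some $b \in B$ with $\deg(b) < k$, and I add the edge $ab$. Iterating produces a $k$-regular bipartite multigraph $\tilde{G}$ whose edge set contains $E(G)$. Any partition of $E(\tilde{G})$ into $k$ matchings restricts to a partition of $E(G)$ into $k$ matchings, so it suffices to treat the $k$-regular case.

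For a $k$-regular bipartite multigraph, I would verify Hall's condition directly: for any $S \subseteq A$, the $k|S|$ edges incident to $S$ all land in $N(S)$, which can absorb at most $k|N(S)|$ edges, hence $|N(S)| \ge |S|$. Consequently $\tilde{G}$ contains a perfect matching $M$; removing $M$ leaves a $(k-1)$-regular bipartite multigraph, and by induction on $k$ its edges partition into $k-1$ matchings. Together with $M$ this yields the required $k$ matchings.

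The main obstacle is largely bookkeeping in the regularization step: one must ensure the resulting $\tilde{G}$ is bipartite, that edge multiplicities are handled consistently, and that the partition of $E(\tilde{G})$ really does restrict to a valid partition of $E(G)$ rather than merely covering it. Once these technicalities are settled, the Hall-plus-induction core of the argument is short and clean, and this is exactly the template that the generalized-polymatroid approach in the paper will seek to extend to the matroid-intersection setting.
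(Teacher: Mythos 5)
Your proof is correct: the regularization step (pad to equal sides, greedily add edges between deficient vertices until the multigraph is $k$-regular) is sound, Hall's condition is verified correctly for regular bipartite multigraphs, and the induction peeling off perfect matchings goes through; restricting each matching of $\tilde{G}$ to $E(G)$ does give a genuine partition since the paper explicitly allows empty parts. However, the paper itself gives no proof of this theorem --- it is cited as classical background --- and the proof the paper's machinery actually yields is quite different from yours. König's theorem arises in the paper as the special case of Theorem~\ref{thm:laminar-partition} where both laminar matroids are partition matroids; there the inductive step does not use Hall's theorem but rather the observation that the all-$\frac{1}{k}$ vector lies in $Q(p_1,b_1)\cap Q(p_2,b_2)\cap[0,1]^E$ for suitable intersecting-paramodular pairs, so that integrality of g-polymatroid intersection (Theorem~\ref{thm:intersection}) produces a $(0,1)$-vertex, i.e., one color class $X$ with $E\setminus X$ still $(k-1)$-partitionable on both sides. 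Both arguments share the ``extract one color class and recurse'' skeleton, but the key lemmas differ: your route is elementary and self-contained, whereas the regularization trick it rests on has no analogue for general matroid pairs; the polyhedral route is heavier but is exactly what survives the passage to laminar matroids and to the Kotlar--Ziv classes in Sections~\ref{sec:laminar}--\ref{sec:new}. So your closing remark should be sharpened: what the paper extends is not the Hall-based template but the fractional-point-plus-integrality template of \cite[Section 20.3]{Schr03}.
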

Let $G=(U,V;E)$ be a bipartite graph. 
A subset $X$ of $E$ is 
a matching if and only if it is a common independent set
of two partition matroids $M_{1}=(E,\I_{1})$ and $M_{2}=(E,\I_{2})$,
where $\I_{1}$ (resp.,\ $\I_{2}$) is the family of edge sets 
in which no two edges are adjacent at $U$ (resp.,\ at $V$). 
The maximum degree of $G$ coincides with the minimum number $k$ 
such that $E$ can be partitioned into $k$ independent sets of $M_{1}$ and also 
into $k$ independent sets of $M_{2}$.
We then naturally conceive the following problem for a general matroid pair on the common ground set.

\begin{problem}\label{prob:main}
Given two matroids $M_{1}=(E,\I_{1})$ and $M_{2}=(E,\I_{2})$ and a positive integer $k$ 
such that $E$ can be partitioned into $k$ independent sets of $M_{1}$ and also 
into $k$ independent sets of $M_{2}$, find a
partition of $E$ into $k$ common independent sets of $M_{1}$ and $M_{2}$.
\end{problem}

Solving Problem \ref{prob:main} amounts to extending Theorem \ref{thm:Konig} into matroid intersection. 
Such an extension is proved for arborescences in digraphs \cite{Edm73} 
and the intersection of two strongly base orderable matroids \cite{DM76}, 
while such an extension is impossible for a simple example of the intersection of a graphic matroid and a partition matroid 
on the edge set of $K_4$ \cite[Section 42.6c]{Schr03}. 
Indeed, 
Problem \ref{prob:main} is 
known to be a challenging problem:
we only have partial answers in the literature \cite{AB06,DM76,Edm73,HKL11,KZ05}, 
\change{and a sufficient condition for Problem \ref{prob:main} to have a solution is equivalent to 
the famous conjecture of Rota (in \cite{HR94}).}

\begin{conjecture}[Rota's conjecture (see \cite{HR94})]
\change{
Let $M=(E,\I)$ be a matroid 
such that $E$ has a partition $\{A_1, A_2\ldots, A_k\}$, 
where $A_i$ is a base of $M$ and $|A_i|=k$ for each $i=1,2,\ldots,k$. 
Then, 
there exists another patition $\{B_1,B_2,\ldots, B_k\}$ of $E$ such that 
each $B_j$ $(j=1,2,\ldots, k)$ is a base of $M$ and $|A_i \cap B_j|=1$ for each $i=1,2,\ldots, k$. }
\end{conjecture}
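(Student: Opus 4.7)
The plan is to reduce Rota's conjecture to an instance of Problem~\ref{prob:main}. Together with $M$, consider the partition matroid $N = (E, \J)$ whose independent sets are exactly those subsets of $E$ that contain at most one element from each block $A_i$. I would first verify the hypotheses of Problem~\ref{prob:main} for the pair $(M, N)$. The family $\{A_1, \ldots, A_k\}$ is itself a partition of $E$ into $k$ independent (in fact, base) sets of $M$; and if we fix any bijective indexing $A_i = \{a_{i,1}, \ldots, a_{i,k}\}$, then the ``column'' transversals $T_j := \{a_{1,j}, a_{2,j}, \ldots, a_{k,j}\}$, for $j = 1, \ldots, k$, form a partition of $E$ into $k$ sets each independent in $N$.

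Assuming Problem~\ref{prob:main} could be solved for this pair, the resulting partition $\{B_1, \ldots, B_k\}$ of $E$ into common independent sets would deliver the conjecture. Since $|E| = k^2$ and every $B_j$ is $N$-independent, each $|B_j| \leq k$, so the pigeonhole forces $|B_j| = k$; combined with $B_j \in \J$, this yields $|A_i \cap B_j| = 1$ for all $i$ and $j$. Moreover, $|B_j| = k = \mathrm{rank}(M)$ together with $B_j \in \I$ implies that each $B_j$ is a base of $M$, matching the conjecture's demand.

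The main obstacle is, of course, that Problem~\ref{prob:main} is itself open in general, and $M$ in the reduction is entirely arbitrary. So this plan does not actually resolve the conjecture; rather, it exhibits Rota's conjecture as the special case of Problem~\ref{prob:main} in which the second matroid is a partition matroid adapted to a given base-partition of the first. What one can realistically hope to extract is conditional progress: whenever $M$ lies in a class for which Problem~\ref{prob:main} is solvable against a (laminar) partition matroid---such as the classes reached by the generalized-polymatroid approach announced in the abstract---Rota's conjecture follows for $M$. Pushing the present paper's technique further against this specific $N$ therefore looks like the most promising line of attack within the framework under consideration.
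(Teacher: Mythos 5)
The statement you were asked about is Rota's basis conjecture, which the paper states precisely as a \emph{conjecture}: it is an open problem, and the paper contains no proof of it to compare against. Your write-up is honest about this, and the reduction you give is correct as far as it goes: taking $N$ to be the rank-one-per-block partition matroid on $\{A_1,\dots,A_k\}$, the hypotheses of Problem~\ref{prob:main} hold (the $A_i$ partition $E$ into $M$-independent sets, the column transversals $T_j$ partition $E$ into $N$-independent sets), and your counting argument ($|E|=k^2$, $|B_j|\le k$ by $N$-independence, hence equality throughout, hence $|A_i\cap B_j|=1$ and each $B_j$ is a base of $M$) is sound. This is exactly the connection the paper itself alludes to in the introduction when it says a sufficient condition for Problem~\ref{prob:main} is equivalent to Rota's conjecture.

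The gap is the one you name yourself: the entire content of the conjecture is the solvability of Problem~\ref{prob:main} for an \emph{arbitrary} matroid $M$ against this partition matroid, and nothing in the paper (or anywhere else) supplies that. Two further cautions on your closing suggestion that one should ``push the paper's technique against this specific $N$.'' First, the g-polymatroid approach requires \emph{both} matroids to admit an intersecting-paramodular pair as in Proposition~\ref{prop:g-matroid_approach}; $N$ is laminar and poses no difficulty, but $M$ is arbitrary, and Section~\ref{sec:incompatible} exhibits a six-element transversal matroid (hence even a strongly base orderable one) for which no such pair exists, so the framework provably cannot cover all $M$. Second, the conditional results you could extract are narrower than you suggest: Theorem~\ref{thm:lam+KZ1} would need $M$ to have no $(k+1)$-spanned elements, which is a genuine restriction in the Rota setting (every element is already spanned by each of the $k$ disjoint bases). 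So your proposal is a correct reformulation plus a correct conditional observation, but it is not, and does not claim to be, a proof of the stated conjecture.
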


An interesting class of matroid intersection 
for which Problem~\ref{prob:main} admits a solution is introduced by Kotlar and Ziv \cite{KZ05}. 
For a matroid $M$ on ground set $E$ and a positive integer $k$, 
an element $e$ of $E$ is called \emph{$k$-spanned} if 
there exist $k$ disjoint sets spanning $e$ (see Section \ref{sec:matroid} for definition). 
Kotlar and Ziv \cite{KZ05} presented two sufficient conditions (Theorems \ref{thm:KZ1} and \ref{thm:KZ2})
for the common ground set $E$ of two matroids $M_1$ and $M_2$ 
to be \change{partitionable} into $k$ common independent sets, 
under the assumption that 
no element of $E$ is $(k+1)$-spanned in $M_1$ or $M_2$. 
Since 
the ground set of 
a matroid in which no element  is $(k+1)$-spanned can be partitioned into $k$ independent sets (Lemma \ref{lem:single_matroid}), 
these two cases offer classes of matroid intersection for which Problem~\ref{prob:main} is solvable.

In this paper, 
we present a new approach to Problem \ref{prob:main} building upon 
the integrality of 
\emph{generalized polymatroids} \cite{Frank84,Has82}, 
a comprehensive class of polyhedra associated with a number of tractable combinatorial structures. 
This generalized-polymatroid approach is regarded as an extension of the polyhedral approach to bipartite edge-coloring \cite[Section 20.3]{Schr03}, 
and 
is indeed successful in \emph{supermodular coloring} \cite{Tardos85}, 
which is another matroidal generalization of bipartite edge-coloring (see \cite{Schr85,Schr03}). 
Utilizing generalized polymatroids, \change{in Section~\ref{sec:main},}
we offer alternative proofs and unified \change{understanding} for some special cases 
for which Problem~\ref{prob:main} admits solutions. 
To be more precise, 
we first prove the extension of Theorem \ref{thm:Konig} 
for the intersection of two \emph{laminar matroids}. 
Laminar matroids 
recently attract particular attention based on \change{their} relation to the matroid secretary problem 
(see \cite{FO17} and references therein), 
and 
form a special case of strongly base orderable matroids. 
Thus, 
the generalized-polymatroid approach yields 
another proof for a special case of \cite{DM76}. 
We then show alternative proofs for the two cases of Kotlar and Ziv \cite{KZ05}, 
which offer a new understanding of the tractability of the two cases. 
Moreover, 
we newly prove that Problem~\ref{prob:main} admits a solution for the intersection of 
a laminar matroid and a matroid in the two classes of Kotlar and Ziv \cite{KZ05}. 
Finally, \change{in Section~\ref{sec:incompatible},} we show a limit of the generalized-polymatroid approach by constructing 
an instance of a transversal matroid, 
another special class of strongly base orderable \change{matroids}, 
which is incompatible with
the generalized-polymatroid approach. 


\section{Preliminaries}
\label{sec:pre}
In this section, 
we review the definition and 
fundamental properties of matroids and generalized polymatroids. 
For more details, 
the readers are referred to \cite{FT88,Fuj05,Mbook,Oxl11,Schr03,Wel76}. 

\subsection{Matroid}\label{sec:matroid}
Let $E$ be a finite set. 
For a subset $X\subseteq E$ 
and elements $e\in E\setminus X$, $e'\in X$, 
we denote $X+e=X\cup\{e\}$ and $X-e'=X\setminus\{e'\}$.
For a set family $\I\subseteq 2^{E}$, 
a pair $(E, \I)$ is called a {\em matroid} if $\I$ satisfies 
\begin{itemize}
\setlength{\parskip}{0.0cm}
\setlength{\itemsep}{0.0mm}
\item[(I0)] $\emptyset\in \I$, 
\item[(I1)] $X\subseteq Y\in \I$ implies $X\in \I$, and 
\item[(I2)] If $X,Y\in \I$ and $|X|<|Y|$, then $\exists e\in Y\setminus X: X+e\in \I$.
\end{itemize}
Each member $X$ of $\I$ is called an {\em independent set}. 
In particular, 
an independent set $B\in \I$ is called a {\em base} if it is maximal in $\I$ with respect to inclusion. 
It is known that all bases have the same size.

The {\em rank function} $r:2^{E}\to \Zp$ of a matroid $M=(E, \I)$
is defined by $r(A)=\max\{|X|\mid X\subseteq A,~X\in \I\}$ for any $A\subseteq E$.
Then, it is known that $\I=\set{X| \forall A\subseteq E: |X\cap A|\leq r(A)}$ holds.
A subset $X \subseteq E$ \emph{spans} an element $e \in E$ if $r(X + e) = r(X)$.

For a matroid $M=(E,\I)$ and a subset $S\subseteq E$,
the restriction $M|S$ of $M$ to $S$ is a pair $(S, \I|S)$,
where $\I|S=\set{X|X\in \I,~X\subseteq S}$.
For any $S\subseteq E$, the restriction $M|S$ is again a matroid.

For a matroid $M=(E,\I)$ and a positive integer $k\in \Z$, 
we define a set family $\I^{k}\subseteq 2^{E}$ by
\[\I^{k}=\set{X\subseteq E|\text{$X$ can be partitioned into $k$ sets in $\I$}}.\]
\changeF{%
The following theorem is a special case of  
Edmonds' famous matroid partition theorem \cite{Edmonds65}.
\begin{theorem}[Edmonds \cite{Edmonds65}] \label{thm:matroid_union}
For a matroid $M=(E,\I)$ whose rank function is $r:2^{E}\to \Zp$ and a positive integer $k\in \Z$, 
the pair $M^{k}=(E,\I^{k})$ is a matroid and its rank function $r^{k}:2^E\to \Z$ is given by the following formula:
\begin{equation}
\textstyle r^k(X)=\min\{|X\setminus Y|+k\cdot r(Y)\mid Y\subseteq X \}.
\label{eq:truncation3}
\end{equation}
\end{theorem}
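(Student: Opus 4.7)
The plan is to derive this statement directly from Edmonds' general matroid union theorem by taking $k$ identical copies of $M$. Edmonds proved that for arbitrary matroids $M_1 = (E, \I_1), \ldots, M_k = (E, \I_k)$ with rank functions $r_1, \ldots, r_k$, the family $\I_1 \vee \cdots \vee \I_k := \{X_1 \cup \cdots \cup X_k : X_i \in \I_i\}$ forms the independent sets of a matroid whose rank at $X$ equals $\min_{Y \subseteq X}\{|X \setminus Y| + \sum_{i=1}^k r_i(Y)\}$. Since $\I^k$ in the statement coincides with $\I \vee \cdots \vee \I$ (one uses the standard refinement $X_i \mapsto X_i \setminus (X_1 \cup \cdots \cup X_{i-1})$, which preserves independence, to turn an arbitrary union into a partition, and the convention that empty subsets are allowed), specializing each $r_i = r$ immediately yields both the matroid property of $M^k$ and the rank formula \eqref{eq:truncation3}.

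If a self-contained derivation were desired, the easy direction $r^k(X) \leq \min_{Y \subseteq X}\{|X \setminus Y| + k \cdot r(Y)\}$ would go by a routine counting step: for any $Z \subseteq X$ partitioned as $Z_1 \cup \cdots \cup Z_k$ with $Z_i \in \I$, and any $Y \subseteq X$, independence of $Z_i$ gives $|Z_i \cap Y| \leq r(Y)$, whence $|Z| \leq k \cdot r(Y) + |X \setminus Y|$ after summing over $i$ and splitting $Z_i = (Z_i \cap Y) \cup (Z_i \setminus Y)$. Submodularity of $X \mapsto \min_{Y \subseteq X}\{|X\setminus Y|+k\cdot r(Y)\}$ is also a short check using submodularity of $r$ and a standard uncrossing argument on the minimizers.

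The main obstacle, which is the actual content of Edmonds' theorem, is the matching reverse inequality: for a minimizer $Y^* \subseteq X$ one must exhibit a concrete $Z \subseteq X$ of size $|X \setminus Y^*| + k \cdot r(Y^*)$ together with a witnessing partition in $\I^k$. Edmonds establishes this via an augmenting-path argument in an auxiliary exchange digraph on $k$ disjoint copies of $E$: starting from any partition $(Z_1, \ldots, Z_k)$ of a current $Z \in \I^k$, one searches for an augmenting sequence that reroutes elements between copies to enlarge $Z$; when no such sequence exists, the set of reachable ground elements yields a certifying $Y^*$. Since the present theorem is explicitly billed as a special case of Edmonds' partition theorem, I would simply cite \cite{Edmonds65} for this half rather than reproduce the augmenting-path analysis, and obtain the formula by the specialization described above.
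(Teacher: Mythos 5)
Your proposal is correct and matches the paper's treatment: the paper states this result without proof, attributing it directly to Edmonds' matroid partition theorem, and your specialization of the general union theorem to $k$ identical copies (with the easy counting direction spelled out and the augmenting-path half deferred to \cite{Edmonds65}) is exactly the intended justification.
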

Then, $r^{k}(X)$ is the maximum size of a subset of $X$ partitionable into $k$ independent sets.
}


\subsection{Generalized Polymatroid}

Let $E$ be a finite set.
A function $b:2^{E}\to \R\cup\{\infty\}$ is called {\em submodular}
if it satisfies the {\em submodular inequality}
\begin{equation*}
b(A)+b(B)\geq b(A\cup B)+b(B\cap A) 
\end{equation*}
for any $A, B\subseteq E$, 
where the inequality is \change{assumed} to hold if the left-hand side is infinite.
A function $p:2^{E}\to \R\cup\{-\infty\}$ is called {\em supermodular}
if $-p$ is submodular.
A pair $(p,b)$ is called {\em paramodular} if 
we have 
\begin{itemize}
\setlength{\parskip}{0.0cm}
\setlength{\itemsep}{0.0mm}
\item[(i)] $p(\emptyset)=b(\emptyset)=0$, 
\item[(ii)] $p$ is supermodular, $b$ is submodular, and
\item[(iii)] $p$ and $b$ satisfy the {\em cross inequality}
\begin{equation}
b(A)-p(B)\geq b(A\setminus B)-p(B\setminus A) \label{eq:cross-ineq}
\end{equation}
for any $A,B\subseteq E$, where the inequality is \change{assumed} to hold if the left-hand side is infinite.
\end{itemize}

For a pair of set functions $p:2^{E}\to \R\cup\{-\infty\}$ and $b:2^{E}\to \R\cup \{\infty\}$,
we associate a polyhedron $Q(p,b)$ defined by 
\[Q(p,b)=\set{x\in \R^{E}|\forall A\subseteq E: p(A)\leq x(A)\leq b(A)},\]
where $x(A)=\sum\set{\change{x_e}| e\in A}$. 
Here, $p$ serves as a lower bound while $b$ serves as an upper bound of the polyhedron $Q(p,b)$.
A polyhedron $P\subseteq \R^{E}$ is called a {\em generalized polymatroid} (for short, a {\em g-polymatroid})
if $P=Q(p,b)$ holds for some paramodular pair $(p,b)$.
It is known \cite{Frank84, FT88} that such a paramodular pair is uniquely defined 
for any g-polymatroid. 
\changeF{It is also known that a generalized-polymatroid is equivalent to a base polyhedron: 
any g-polymatroid can be obtained as 
a projection of a base polyhedron \cite{Fuj84} (see also \cite[Theorem 3.58]{Fuj05}).}


We next introduce the concept of {\em intersecting paramodularity}, which is
weaker than paramodularity but still yields g-polymatroids. 
We say that subsets $A,B\subseteq E$ are {\em intersecting} 
if none of $A\cap B$, $A\setminus B$ and $B\setminus A$ is empty.
A function $b:2^{E}\to \R\cup\{\infty\}$ is called {\em intersecting submodular} if
it satisfies the submodular inequality for any intersecting subsets $A, B\subseteq E$.
A function $p:2^{E}\to \R\cup\{-\infty\}$ is called {\em intersecting supermodular}
if $-p$ is intersecting submodular.
A pair $(p,b)$ is called {\em intersecting paramodular} if $p$ and $b$ are
intersecting super- and submodular functions, respectively, and
the cross inequality \eqref{eq:cross-ineq} holds for any intersecting subsets $A, B\subseteq E$.

The following theorem, 
\changeF{which is derived from the fact that 
intersecting paramodularity corresponds to the projection of \emph{crossing submodularity} \cite{Fuj84,Fuj05}}, 
states that an intersecting-paramodular pair $(p,b)$
defines a g-polymatroid.
We say that a pair $(p,b)$ of set functions 
is {\em integral} if each of $b(A)$ and $p(A)$ is \change{an} integer or infinite for any $A\subseteq E$.
We say that a polyhedron is {\em integral} if each of its \change{nonempty} faces contains an integral point,
so for pointed polyhedra, the vertices should be integral.
\begin{theorem}[Frank \cite{Frank84}]\label{thm:intersecting}
For an intersecting-paramodular pair $(p,b)$ such that $Q(p,b)\neq \emptyset$,
the polyhedron $Q(p,b)$ is a g-polymatroid, which is, 
in addition, integral whenever $(p,b)$ is integral. 
\end{theorem}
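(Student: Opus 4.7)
The plan is to derive Theorem \ref{thm:intersecting} from the analogous integrality result for base polyhedra of crossing-submodular functions, which is the classical framework of Fujishige \cite{Fuj84,Fuj05}; this is precisely the reduction hinted at in the remark preceding the statement.

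First, I would \emph{lift} $(p,b)$ to a single submodular-type function on an enlarged ground set. Introduce a new element $e_{0}\notin E$, set $\tilde{E}=E\cup\{e_{0}\}$, and define $\tilde{b}:2^{\tilde{E}}\to \R\cup\{\infty\}$ by
\begin{equation*}
\tilde{b}(A)=b(A) \quad (A\subseteq E), \qquad \tilde{b}(A\cup\{e_{0}\})=b(E)-p(E\setminus A) \quad (A\subseteq E),
\end{equation*}
with the convention $-(-\infty)=\infty$. A direct computation shows that the projection of the base polyhedron
$B(\tilde{b})=\{y\in \R^{\tilde{E}}\mid y(\tilde{E})=\tilde{b}(\tilde{E}),\ y(S)\le \tilde{b}(S)\ \forall S\subseteq \tilde{E}\}$
onto the coordinates in $E$ is exactly $Q(p,b)$: the constraints $y(A)\le \tilde{b}(A)$ for $A\subseteq E$ give the upper bounds by $b$, while the constraints $y(A\cup\{e_{0}\})\le\tilde{b}(A\cup\{e_{0}\})$ combined with the base equation translate into the lower bounds $y|_{E}(E\setminus A)\ge p(E\setminus A)$.

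Second, I would check that $\tilde{b}$ is \emph{crossing submodular}, i.e.\ $\tilde{b}(S)+\tilde{b}(T)\ge \tilde{b}(S\cup T)+\tilde{b}(S\cap T)$ whenever $S,T,\tilde{E}\setminus(S\cup T),S\cap T$ are all nonempty. The verification splits into three cases according to whether $e_{0}$ lies in $S,T$: when $e_{0}$ is in neither, $S$ and $T$ are intersecting subsets of $E$ and the inequality is precisely intersecting submodularity of $b$; when $e_{0}$ lies in both, writing $S=A\cup\{e_{0}\}$, $T=B\cup\{e_{0}\}$ with $A,B\subseteq E$, the inequality reduces after cancellation of $b(E)$ to intersecting supermodularity of $p$ applied to $E\setminus A, E\setminus B$; when $e_{0}$ lies in exactly one (say $T=B\cup\{e_{0}\}$), the inequality reduces, after substitution, to the cross inequality \eqref{eq:cross-ineq} for the intersecting pair $A=S$ and $E\setminus B$. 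Integrality of $\tilde{b}$ follows directly from integrality of $(p,b)$.

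Third, having certified that $\tilde{b}$ is crossing submodular and (assuming $Q(p,b)\neq\emptyset$) $B(\tilde{b})\neq\emptyset$, I would invoke the known theorem of Fujishige that a nonempty base polyhedron defined by a crossing-submodular function is a g-polymatroid, and is integral if the function is integer-valued. Projecting along the $e_{0}$ coordinate, $Q(p,b)$ inherits the g-polymatroid property (g-polymatroids are closed under coordinate projection by construction) together with integrality, since projection preserves the integrality of vertices when the base polyhedron has an integer defining equation $y(\tilde{E})=\tilde{b}(\tilde{E})$.

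The main obstacle is bookkeeping rather than conceptual: the case analysis in step two must handle infinite values of $b$ or $p$ correctly, and one must ensure that $B(\tilde{b})\neq\emptyset$ whenever $Q(p,b)\neq\emptyset$, so that Fujishige's integrality theorem applies. Once the lifting is correctly set up, the crossing-submodular machinery does all the work.
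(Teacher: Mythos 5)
The paper states this theorem as a quoted result of Frank and supplies no proof of its own; the remark immediately preceding it points to exactly the reduction you propose (intersecting paramodularity as the projection of crossing submodularity), so your route is the intended one. Your three-case verification of crossing submodularity is structurally right: the crossing condition on $S,T\subseteq\tilde{E}$ translates precisely into the intersecting condition needed to invoke, respectively, the intersecting submodularity of $b$, the intersecting supermodularity of $p$, and the cross inequality \eqref{eq:cross-ineq}; and the appeal to the Fujishige/Frank--Tardos integrality theorem for base polyhedra of crossing-submodular functions is a legitimate final step.

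There is, however, one concrete defect in the lift as written. You set $\tilde{b}(A\cup\{e_0\})=b(E)-p(E\setminus A)$, so in particular $\tilde{b}(\tilde{E})=b(E)$. But in this paper $b$ may take the value $\infty$, and it does so at $E$ in the main application (Lemma \ref{lem:g-polymatroid-approach-laminar}, where $b(B)=\infty$ for every $B\notin\A$). In that case every set containing $e_0$ receives value $\infty$, the base equation $y(\tilde{E})=\tilde{b}(\tilde{E})$ is meaningless, $B(\tilde{b})$ is empty, and the lower-bound constraints vanish from the projection, so the argument collapses exactly in the cases the paper needs. The fix is to normalize at $p$ rather than at $b$: take $\tilde{b}(A\cup\{e_0\})=-p(E\setminus A)$, so that $\tilde{b}(\tilde{E})=-p(\emptyset)=0$ (one may assume $p(\emptyset)=0$ and $b(\emptyset)=0$ without changing $Q(p,b)$, given $Q(p,b)\neq\emptyset$). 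With this change your case analysis goes through verbatim, the map $x\mapsto(x,-x(E))$ is an affine bijection between $Q(p,b)$ and $B(\tilde{b})$, and nonemptiness and integrality transfer in both directions.
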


In general, 
the intersection of two integral polyhedra $P_1$ and $P_2$ 
is not necessarily integral. 
For two integral g-polymatroids, 
however, 
the intersection 
preserves integrality as stated below. 
This fact plays a key role in our g-polymatroid approach to Problem 1.2. 

\begin{theorem}[Integrality of g-polymatroid intersection \cite{Frank84}]\label{thm:intersection}
For two integral g-polymatroids $P_{1}$ and $P_{2}$, 
the intersection $P_{1}\cap P_{2}$ is an integral polyhedron if it is nonempty.	
\end{theorem}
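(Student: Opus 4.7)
The plan is to reduce the claim to the (better-known) fact that the intersection of two integral base polyhedra on a common ground set is an integral polyhedron whenever nonempty, which is a standard consequence of Edmonds' polymatroid intersection theorem. The bridge is the remark made in the preliminaries that every g-polymatroid is an affine projection of a base polyhedron; the reduction will be carried out by explicitly lifting each $P_i$ to a base polyhedron on an enlarged ground set.

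Concretely, given integral g-polymatroids $P_i=Q(p_i,b_i)\subseteq\R^E$ for $i=1,2$, I would fix an integer $K$, adjoin a new element $s\notin E$, and define a set function $f_i:2^{E\cup\{s\}}\to\R\cup\{\infty\}$ by
\[
f_i(S)=b_i(S)\quad(S\subseteq E),\qquad f_i(T\cup\{s\})=K-p_i(E\setminus T)\quad(T\subseteq E).
\]
A case analysis establishes that $f_i$ is submodular on all of $2^{E\cup\{s\}}$: the case of two subsets of $E$ is submodularity of $b_i$; the case of two subsets each containing $s$ becomes supermodularity of $p_i$ after rewriting in complements; and the mixed case $A\subseteq E$, $B=T\cup\{s\}$ reduces, after the substitution $X=A$, $Y=E\setminus T$, to the cross inequality \eqref{eq:cross-ineq} for $(p_i,b_i)$. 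Integrality of $(p_i,b_i)$ transfers to $f_i$, so the base polyhedron
\[
B_i=\{y\in\R^{E\cup\{s\}}\mid y(E\cup\{s\})=K,\ y(S)\le f_i(S)\ \text{for all }S\subseteq E\cup\{s\}\}
\]
is integral. Moreover, the affine map $\varphi\colon x\mapsto(x,K-x(E))$ is a bijection $P_i\to B_i$ that preserves and reflects integrality of points (since $K\in\Z$) and sends faces to faces.

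Using the same constant $K$ for both lifts, one obtains $\varphi(P_1\cap P_2)=B_1\cap B_2$. Now $B_1$ and $B_2$ are two integral base polyhedra on the common ground set $E\cup\{s\}$ with the same base sum $K$, so Edmonds' polymatroid intersection theorem applies: if $B_1\cap B_2$ is nonempty, it is an integral polyhedron, that is, every nonempty face contains an integral point. Pulling back by $\varphi^{-1}$ yields the corresponding property for $P_1\cap P_2$ and finishes the argument. The only nontrivial step is the mixed case of the submodularity verification for $f_i$, which is exactly where the paramodularity hypothesis (and specifically the cross inequality) is used; everything else is routine bookkeeping about the affine lift.
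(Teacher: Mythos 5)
The paper does not prove this statement; it is quoted as a known theorem of Frank \cite{Frank84}, so there is no internal proof to compare against. Your argument is the classical one and it is correct: lifting each $Q(p_i,b_i)$ to a base polyhedron on $E\cup\{s\}$ via $f_i(S)=b_i(S)$ and $f_i(T\cup\{s\})=K-p_i(E\setminus T)$ is exactly the projection correspondence the paper alludes to in Section~2.2, the mixed case of the submodularity check is indeed equivalent to the cross inequality \eqref{eq:cross-ineq} after the substitution $Y=E\setminus T$, and the conclusion then follows from the total dual integrality of the Edmonds system for two integer submodular functions (the common level set $y(E\cup\{s\})=K$ is a face of that intersection, and faces of integral polyhedra are integral). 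Two small points you should make explicit: first, the theorem is stated for integral \emph{polyhedra} $P_i$, so you need the standard fact that an integral g-polymatroid is defined by an integral paramodular pair (take $b_i(A)=\max\{x(A)\mid x\in P_i\}$ and $p_i(A)=\min\{x(A)\mid x\in P_i\}$, which are integers when finite because the optima are attained on nonempty faces); second, the convention for infinite values of $b_i$ and $-\infty$ values of $p_i$ carries over to $f_i$ harmlessly since the corresponding constraints are vacuous. With those remarks the proof is complete.
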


As this paper studies partitions of finite sets,
we are especially interested in vectors in 
the intersection of a g-polymatroid and the unit hypercube $[0,1]^{E}=\set{x\in \R^{E}|\forall e\in E:0\leq x(e)\leq 1}$.
It is known that the intersection is again a g-polymatroid.
\begin{theorem}[Frank \cite{Frank84}]\label{thm:unit_cube}
For a g-polymatroid $P$, if $P\cap [0,1]^{E}$ is nonempty, then
the intersection $P\cap [0,1]^{E}$ is again a g-polymatroid, which is,
in addition, integral whenever $P$ is integral. 
\end{theorem}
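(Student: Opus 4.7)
The plan is to exhibit an explicit paramodular pair $(p', b')$ with $Q(p', b') = P \cap [0,1]^E$, and then invoke Theorem~\ref{thm:intersecting} for integrality. Writing $P = Q(p,b)$, the natural candidates combine the bounds from $(p,b)$ with the box bounds $0 \le x_e \le 1$:
\[
b'(A) := \min_{C \subseteq E}\bigl\{b(C) + |A \setminus C|\bigr\}, \qquad
p'(A) := \max_{C \subseteq E}\bigl\{p(C) - |C \setminus A|\bigr\}.
\]

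I would first verify $Q(p', b') = P \cap [0,1]^E$. For the inclusion $P \cap [0,1]^E \subseteq Q(p', b')$, take $x \in P \cap [0,1]^E$: decomposing $x(A) = x(A \cap C) + x(A \setminus C)$ and combining $x(A \cap C) \le x(C) \le b(C)$ with $x(A \setminus C) \le |A \setminus C|$ gives $x(A) \le b'(A)$, and the dual argument gives $x(A) \ge p'(A)$. For the reverse inclusion, the witness $C = A$ yields $b'(A) \le b(A)$ and $p'(A) \ge p(A)$, so $Q(p',b') \subseteq P$; the witness $C = \emptyset$ yields $b'(\{e\}) \le 1$ and $p'(\{e\}) \ge 0$, so $Q(p',b') \subseteq [0,1]^E$. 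The nonemptiness hypothesis supplies some $x \in P \cap [0,1]^E$ which forces $0 \le b(C)$ and $p(C) \le |C|$ for every $C$, hence $p'(\emptyset) = b'(\emptyset) = 0$.

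What remains is to verify that $(p', b')$ is paramodular. Submodularity of $b'$ is obtained by the standard uncrossing: pick optima $C_1, C_2$ in the definitions of $b'(A_1), b'(A_2)$, apply submodularity of $b$ to $(C_1 \cap C_2, C_1 \cup C_2)$, and close the argument via the element-wise cardinality inequality
\[
|A_1 \setminus C_1| + |A_2 \setminus C_2| \;\ge\; |(A_1 \cap A_2) \setminus (C_1 \cap C_2)| + |(A_1 \cup A_2) \setminus (C_1 \cup C_2)|,
\]
which is immediate by a case check on each element. Supermodularity of $p'$ is dual. For the cross inequality one picks witnesses $C_1$ for $b'(A)$ and $C_2$ for $p'(B)$, invokes the cross inequality of $(p,b)$ on the pair $(C_1 \setminus C_2,\, C_2 \setminus C_1)$, and again closes via an element-wise cardinality check. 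I expect this cross-inequality step to be the main obstacle, since the two witnesses come from different optimization problems and must be uncrossed through the cross inequality of the original pair in a way compatible with the cardinality terms. Once paramodularity is confirmed, integrality is immediate: when $(p,b)$ is integral, so is $(p', b')$ as a min/max of integer-valued expressions, and Theorem~\ref{thm:intersecting} (applicable because paramodularity is a special case of intersecting paramodularity) yields integrality of $Q(p', b')$.
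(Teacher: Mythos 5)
The paper offers no proof of this theorem: it is quoted as a known result of Frank~\cite{Frank84} and used as a black box, so the comparison is between your explicit argument and nothing. Your route is the standard one and is essentially correct; it makes this background step self-contained at the cost of length. The functions $b'(A)=\min_{C\subseteq E}\{b(C)+|A\setminus C|\}$ and $p'(A)=\max_{C\subseteq E}\{p(C)-|C\setminus A|\}$ are precisely Frank's truncation of a paramodular pair by the box $[0,1]^E$; your verification of $Q(p',b')=P\cap[0,1]^E$ and of $p'(\emptyset)=b'(\emptyset)=0$ (which indeed needs the nonemptiness hypothesis) is correct, and the cross-inequality step you flagged as the main obstacle does close. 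To be precise about that step: the optima $C_1$ for $b'(A)$ and $C_2$ for $p'(B)$ may be taken with $b(C_1)$ and $p(C_2)$ finite (the candidate $C=\emptyset$ already gives a finite value); one applies the cross inequality of $(p,b)$ to the pair $(C_1,C_2)$ themselves --- applying it to $(C_1\setminus C_2,\,C_2\setminus C_1)$ as you wrote would be vacuous, since those two sets are disjoint --- obtaining $b(C_1)-p(C_2)\ge b(C_1\setminus C_2)-p(C_2\setminus C_1)$, and then uses $C_1\setminus C_2$ and $C_2\setminus C_1$ as feasible witnesses for $b'(A\setminus B)$ and $p'(B\setminus A)$. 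The remaining cardinality inequality
\[
|A\setminus C_1|+|C_2\setminus B|\;\ge\;|(A\setminus B)\setminus(C_1\setminus C_2)|+|(C_2\setminus C_1)\setminus(B\setminus A)|
\]
holds elementwise (with equality when $e\in A\cap C_2$ and trivially otherwise), so paramodularity of $(p',b')$ follows. One small repair for the integrality claim: the hypothesis is that the \emph{polyhedron} $P$ is integral, so before asserting that $(p',b')$ is a min/max of integers you should note that the unique paramodular pair describing an integral g-polymatroid is itself integral, because $b(A)=\max\{x(A):x\in P\}$ and $p(A)=\min\{x(A):x\in P\}$ are attained on nonempty faces, hence at integral points, whenever finite. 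With that, Theorem~\ref{thm:intersecting} yields integrality of $Q(p',b')$ as you say.
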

Similarly to the definition of $Q(p,b)$,
for a pair of set functions $p:2^{E}\to \R\cup\{-\infty\}$ and $b:2^{E}\to \R\cup \{\infty\}$,
we associate the following set family:
\[\F(p,b)=\set{X\subseteq E|\forall A\subseteq E: p(A)\leq |X\cap A|\leq b(A)}.\]

For a subset $Y\subseteq E$, its {\em characteristic vector} $\chi_{Y}\in \{0,1\}^{E}$
is defined by $\chi_{Y}(e)=1$ for $e\in Y$ and $\chi_{Y}(e)=0$ for $e\in E\setminus Y$. 
The following observation is derived from Theorem~\ref{thm:unit_cube}.
\begin{lemma}\label{lem:convex}
For an integral intersecting-paramodular pair $(p,b)$,
the polyhedron $Q(p,b)\cap [0,1]^{E}$ is a convex hull of 
the characteristic vectors of the members of $\F(p,b)$.
\end{lemma}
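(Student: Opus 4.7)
The plan is to assemble the lemma directly from the two integrality results just quoted, Theorems~\ref{thm:intersecting} and~\ref{thm:unit_cube}, together with the obvious identification between $\{0,1\}$-vectors and characteristic vectors of subsets of $E$.

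First I would dispose of the degenerate case: if $Q(p,b)\cap[0,1]^E=\emptyset$, then no $\chi_X$ lies in $Q(p,b)$, so $\F(p,b)=\emptyset$ and both sides of the claimed equality are the empty set. So assume $Q(p,b)\cap[0,1]^E\neq\emptyset$, which in particular forces $Q(p,b)\neq\emptyset$.

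Next I would apply Theorem~\ref{thm:intersecting} to the integral intersecting-paramodular pair $(p,b)$ to conclude that $Q(p,b)$ is an integral g-polymatroid. Theorem~\ref{thm:unit_cube} then upgrades this to: $Q(p,b)\cap[0,1]^E$ is an integral g-polymatroid. Since it is also contained in the bounded set $[0,1]^E$, it is a bounded integral polyhedron, hence a polytope equal to the convex hull of its integral points.

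The final step is to identify those integral points. A point $x\in Q(p,b)\cap[0,1]^E\cap\Z^E$ has coordinates in $\{0,1\}$, so $x=\chi_X$ for $X=\{e\in E \mid x_e=1\}$. For this characteristic vector, $x(A)=|X\cap A|$, so the defining inequalities $p(A)\le x(A)\le b(A)$ for all $A\subseteq E$ are exactly the condition $X\in\F(p,b)$. Conversely every such $\chi_X$ clearly lies in $Q(p,b)\cap[0,1]^E$. So the integral points of $Q(p,b)\cap[0,1]^E$ are precisely $\{\chi_X \mid X\in\F(p,b)\}$, and taking convex hulls finishes the proof. There is no real obstacle here; the only thing to be careful about is invoking Theorem~\ref{thm:unit_cube} rather than trying to verify intersecting-paramodularity of a modified pair by hand, and remembering that ``integral polyhedron'' in the paper's sense (every nonempty face contains an integral point) immediately gives the convex-hull representation for a bounded polyhedron.
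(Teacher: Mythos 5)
Your proposal is correct and follows essentially the same route as the paper: invoke Theorem~\ref{thm:unit_cube} (via Theorem~\ref{thm:intersecting}) to get integrality of the bounded polyhedron $Q(p,b)\cap[0,1]^E$, then identify its $\{0,1\}$-points with the characteristic vectors of members of $\F(p,b)$. Your explicit handling of the empty case and the remark that a bounded integral polyhedron is the convex hull of its integral points are minor elaborations of the same argument.
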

\begin{proof}
By Theorem~\ref{thm:unit_cube}, $Q(p,b)\cap [0,1]^{E}$ is integral,
and hence all its vertices are $(0, 1)$-vectors. 
Also, 
by the definition of $Q(p,b)$ and $\F (p,b)$, 
we have $y\in Q(p,b)\cap \{0,1\}^{E}$ if and only if $y=\chi_{Y}$ for some $Y\in \F(p,b)$.
Thus, the vertices of $Q(p,b)\cap [0,1]^{E}$ coincides with the
characteristic vectors of the members of $\F(p,b)$.
\end{proof}

\section{Generalized-Polymatroid Approach}
\label{sec:main}

In this section, 
we exhibit 
some cases of matroid intersection for which a solution of Problem~\ref{prob:main} can be constructed 
by utilizing the g-polymatroid intersection theorem (Theorem \ref{thm:intersection}). 
In Section~\ref{sec:general}, 
we describe a general method to apply Theorem \ref{thm:intersection} for solving Problem~\ref{prob:main}. 
In Section~\ref{sec:laminar}, 
we use this method to prove 
an extension of Theorem \ref{thm:Konig} to the intersection of two laminar matroids, 
a special case of intersection of two strongly base orderable matroids \cite{DM76}. 
In Section \ref{sec:k+1-spanned}, 
we utilize this method for alternative proofs for two classes of 
matroid intersection due to Kotlar and Ziv~\cite{KZ05}. 
Section \ref{sec:new} presents a new class of matroid intersection for which Problem~\ref{prob:main} admits a solution: 
intersection of a laminar matroid and a matroid in Kotlar and Ziv's classes. 
Finally, in Section~\ref{sec:complexity}, 
we explain an algorithmic implementation of our general method and analyze its time complexity.

\subsection{General Method}
\label{sec:general}

With the notations introduced in Section~\ref{sec:matroid}, 
now Problem~\ref{prob:main} is reformulated as follows.

\let\temp\thetheorem
\renewcommand{\thetheorem}{\ref{prob:main}}
\begin{problem}[reformulated]
Given matroids $M_{1}=(E,\I_{1})$ and $M_{2}=(E,\I_{2})$ and a positive integer $k$ 
such that $E\in \I_{1}^{k}\cap \I_{2}^{k}$, find a
partition $\{X_{1},X_{2},\dots,X_{k}\}$ of $E$ such that $X_{j}\in \I_{1}\cap \I_{2}$ for $j=1,2,\dots,k$.
\end{problem}
\let\thetheorem\temp
\addtocounter{theorem}{-1}

Our general method to solve Problem \ref{prob:main} is to 
find $X \in \I_{1}\cap \I_{2}$ such that $E\setminus X\in \I^{k-1}_{1}\cap \I^{k-1}_{2}$ with the aid of g-polymatroid intersection, 
replace $E$ and $k$ with $E\setminus X$ \change{and} $k-1$, respectively, 
and iterate. 
The following proposition, 
which can be proved by combining Theorems~\ref{thm:intersecting}--\ref{thm:unit_cube} and Lemma \ref{lem:convex}, 
shows 
a necessary condition 
that this method can be applied. 

%
\begin{proposition}\label{prop:g-matroid_approach}
Let $M_{1}=(E,\I_{1})$, $M_{2}=(E,\I_{2})$ be matroids 
and $k\in \Z$ be a positive integer with $E\in \I_{1}^{k}\cap \I_{2}^{k}$. 
If there exists
an integral intersecting-paramodular \change{pair} $(p_{i},b_{i})$ 
such that 
\begin{align}
\label{EQpb}
\F(p_i,b_i)=\set{X\subseteq E|X\in \I_i,~E\setminus X\in \I_i^{k-1}} 
\end{align}
for each $i=1,2$, 
then there exists a subset $X\subseteq E$ such that 
$X\in \I_{1}\cap \I_{2}$ and $E\setminus X\in \I_{1}^{k-1}\cap \I_{2}^{k-1}$.
\end{proposition}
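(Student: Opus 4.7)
The plan is to chain together Theorems~\ref{thm:intersecting}, \ref{thm:intersection}, and \ref{thm:unit_cube} together with Lemma~\ref{lem:convex}, using the hypothesis $E\in\I_1^k\cap\I_2^k$ only to verify that the relevant polyhedron is nonempty. I would argue as follows.

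First I would establish that $Q(p_i,b_i)\cap[0,1]^E\neq\emptyset$ for each $i=1,2$. Since $E\in\I_i^k$, there exists a partition $\{X_1^{(i)},\dots,X_k^{(i)}\}$ of $E$ with every $X_j^{(i)}\in\I_i$. For each $j$, the complement $E\setminus X_j^{(i)}=\bigcup_{\ell\neq j}X_\ell^{(i)}$ is a union of $k-1$ independent sets of $M_i$, so $E\setminus X_j^{(i)}\in\I_i^{k-1}$. Therefore $X_j^{(i)}\in\F(p_i,b_i)$ by \eqref{EQpb}, and in particular the characteristic vectors $\chi_{X_j^{(i)}}$ lie in $Q(p_i,b_i)\cap[0,1]^E$. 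Averaging gives $\tfrac{1}{k}\chi_E\in Q(p_i,b_i)\cap[0,1]^E$ for both $i=1,2$ (either directly via Lemma~\ref{lem:convex}, or because $\sum_j\chi_{X_j^{(i)}}=\chi_E$ and g-polymatroids are convex).

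Next I would apply Theorem~\ref{thm:intersecting} to conclude that each $Q(p_i,b_i)$ is an integral g-polymatroid, and then Theorem~\ref{thm:unit_cube} to conclude that $P_i:=Q(p_i,b_i)\cap[0,1]^E$ is also an integral g-polymatroid. By the preceding step, $\tfrac{1}{k}\chi_E\in P_1\cap P_2$, so $P_1\cap P_2\neq\emptyset$. Theorem~\ref{thm:intersection} then tells us that $P_1\cap P_2$ is an integral polyhedron. Being contained in $[0,1]^E$ and integral, it contains a $(0,1)$-vector, say $\chi_X$ with $X\subseteq E$.

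Finally I would translate this back to the matroid language: since $\chi_X\in Q(p_i,b_i)\cap\{0,1\}^E$, we have $X\in\F(p_i,b_i)$ for both $i=1,2$, which by \eqref{EQpb} means $X\in\I_1\cap\I_2$ and $E\setminus X\in\I_1^{k-1}\cap\I_2^{k-1}$, as required. I do not anticipate a serious obstacle here; the only subtlety is remembering to use the hypothesis $E\in\I_1^k\cap\I_2^k$ to certify nonemptiness of the intersection before invoking Theorem~\ref{thm:intersection}, since g-polymatroid intersection preserves integrality only when the intersection is nonempty.
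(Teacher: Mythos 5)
Your proposal is correct and follows essentially the same route as the paper's own proof: use the partition guaranteed by $E\in\I_i^k$ to place the characteristic vectors $\chi_{X_j^{(i)}}$ in $\F(p_i,b_i)$, average to get $\tfrac{1}{k}\chi_E$ in $Q(p_1,b_1)\cap Q(p_2,b_2)\cap[0,1]^E$, and then invoke Theorems~\ref{thm:intersecting}--\ref{thm:unit_cube} to extract an integral point. Your closing remark about needing nonemptiness before applying Theorem~\ref{thm:intersection} is exactly the role the hypothesis plays in the paper as well.
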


\begin{proof}
Because $E\in \I_i^{k}$, 
there is a partition $\{X_{1}, X_{2},\dots,X_{k}\}$ of $E$
such that $X_{j}\in \I_i$ for each $j=1,2,\dots,k$.
For each $j$, we have $X_{j}\in \I_i$ and $E\setminus X_{j}=\bigcup_{\ell:\ell\neq j}X_{\ell}\in \I^{k-1}$,
and hence $X_{j}\in \F(p_i,b_i)$. 
As $\{X_{1}, X_{2},\dots,X_{k}\}$ is a partition of $E$,
the vector $x:=\left(\frac{1}{k}, \frac{1}{k},\dots,\frac{1}{k}\right)^{\top}\in \R^E$ coincides with $\sum_{j=1}^{k}\frac{1}{k}\cdot \chi_{X_{j}}$,
which is a convex combination of the characteristic vectors of $X_{j}\in \F(p_i,b_i)$ ($j=1,2,\ldots,k$).
Then, Lemma~\ref{lem:convex} implies $x\in Q(p_i,b_i)\cap [0,1]^{E}$. 

Now $Q(p_{1},b_{1})\cap Q(p_{2},b_{2})\cap [0,1]^{E}$ includes 
the vector $\left(\frac{1}{k}, \frac{1}{k},\dots, \frac{1}{k}\right)^{\top}$,
and hence is nonempty. 
Then, 
by 
combining Theorems~\ref{thm:intersecting}--\ref{thm:unit_cube}, 
we obtain that $Q(p_{1},b_{1})\cap Q(p_{2},b_{2})\cap [0,1]^{E}$ is an integral nonempty polyhedron, 
and hence it contains a $(0,1)$-vector 
$y$. 
Let $Y\subseteq E$ be the set satisfying $\chi_{Y}=y$. 
Then $y\in Q(p_{1},b_{1})\cap Q(p_{2},b_{2})\cap [0,1]^{E}$ implies $Y\in \F(p_{1},b_{1})\cap\F(p_{2},b_{2})$, 
which means $Y\in \I_{1}\cap \I_{2}$ and $E\setminus Y\in \I_{1}^{k-1}\cap \I_{2}^{k-1}$.
\end{proof}

In order to use our method, 
$M_1$ and $M_2$ should belong to a class of matroids 
in which each member $M=(E,\I)$ with $E\in \I^{k}$ admits an integral intersecting-paramodular pair
$(p,b)$ satisfying 
\eqref{EQpb}
and the restriction $M|(E\setminus X)$ with any $X\in \F(p,b)$ belongs to this class again with $k$ replaced by $k-1$. 
In the subsequent subsections, 
we show that the class of laminar matroids 
and the two matroid classes in \cite{KZ05} have this property.

\begin{remark}
\label{REMindependent}
An advantage of our approach 
is that there is no constraint \change{linking} the two matroids $M_1$ and $M_2$. 
In other words, 
our approach can deal with any pair of matroids such that each of them admits an 
intersecting-paramodular pair required in Proposition~\ref{prop:g-matroid_approach}.
This \change{contrasts} some previous works \cite{DM76, KZ05}, 
which assume that the two matroids are in the same matroid class.
Indeed, utilizing this fact, we provide \change{results} (Theorems~\ref{thm:lam+KZ1} and \ref{thm:lam+KZ2}) 
that is not included in previous works.
\end{remark}

\subsection{Intersection of Two Laminar Matroids}\label{sec:laminar}
In this section, 
we prove that Problem~\ref{prob:main} is solvable for laminar matroids
by our generalized-polymatroid approach. 
Since a laminar matroid is a generalization of a partition matroid,
this extends the bipartite edge-coloring theorem of K\H{o}nig \cite{Konig16}.
On the other hand, since a laminar matroid is strongly base orderable, 
this proof amounts to another proof for a special case of strongly base orderable matroids by Davies and McDiarmid \cite{DM76}.

We first define the concept of laminar matroids. 
A subset family $\A$ of a finite set $E$ is called \emph{laminar} 
if $A_1,A_2 \in \A$ implies 
$A_1 \subseteq A_2$, 
$A_2 \subseteq A_1$, 
or $A_1 \cap A_2 = \emptyset$. 
Let $\A \subseteq 2^E$ be a laminar family 
and $q:\A\to \Zp$ be a capacity function.
Let $\I$ be a family of subsets $X$ satisfying all capacity constraints, i.e.,
\[\I=\set{X\subseteq E|\forall A\in \A: |X\cap A|\leq q(A)}.\]
Then it is known that $(E,\I)$ is a matroid, 
which we call the {\em laminar matroid} induced from \mbox{$\A$ and $q$}.


It is known that a laminar matroid is a special case of 
a \emph{strongly base orderable matroid} \cite{Bru70}. 

\begin{definition}[Strongly base orderable matroid \cite{Bru70}]
A matroid is {\em strongly base orderable} if for each pair of bases $B_{1}$, $B_{2}$
there exists a bijection $\pi:B_{1}\to B_{2}$ such that for each subset $X$ of $B_{1}$
the set $\pi(X)\cup (B_{1}\setminus X)$ is a base again.
\end{definition}

Thus, 
it follows from the result of Davies and McDiarmid \cite{DM76} 
that  Theorem \ref{thm:Konig} can be extended to 
the intersection of laminar matroids. 
\begin{theorem}[Davies and McDiarmid \cite{DM76}]\label{thm:laminar-partition}
For laminar matroids $M_{1}=(E,\I_{1})$ and $M_{2}=(E,\I_{2})$ and a positive integer $k$ 
such that $E\in \I_{1}^{k}\cap \I_{2}^{k}$, there exists a
partition $\{X_{1},X_{2},\dots,X_{k}\}$ of $E$ such that $X_{j}\in \I_{1}\cap \I_{2}$ for each $j=1,2,\dots,k$.
\end{theorem}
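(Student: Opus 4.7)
The plan is to apply Proposition~\ref{prop:g-matroid_approach} and iterate on $k$, so the substantive task is to exhibit, for each laminar matroid $M_i=(E,\I_i)$ induced from a laminar family $\A_i\subseteq 2^E$ with capacity function $q_i\colon\A_i\to\Zp$, an integral intersecting-paramodular pair $(p_i,b_i)$ whose family $\F(p_i,b_i)$ coincides with $\{X\subseteq E : X\in\I_i,\ E\setminus X\in\I_i^{k-1}\}$. My proposed choice is
\[
b_i(A)=\begin{cases} q_i(A) & (A\in\A_i)\\ 0 & (A=\emptyset)\\ \infty & (\text{otherwise}) \end{cases},
\qquad
p_i(A)=\begin{cases} |A|-(k-1)\,q_i(A) & (A\in\A_i)\\ 0 & (A=\emptyset)\\ -\infty & (\text{otherwise}) \end{cases}.
\]
The upper bound $|X\cap A|\le q_i(A)$ on $A\in \A_i$ encodes $X\in\I_i$, while the lower bound $|X\cap A|\ge |A|-(k-1)q_i(A)$ rewrites as $|(E\setminus X)\cap A|\le (k-1)\,q_i(A)$. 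Integrality of $(p_i,b_i)$ is immediate, and $p_i\le b_i$ holds pointwise because the hypothesis $E\in\I_i^k$ forces $|A|\le k\,q_i(A)$ for every $A\in\A_i$.

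Checking intersecting paramodularity costs essentially nothing, because laminarity of $\A_i$ implies that no two distinct members of $\A_i$ form an intersecting pair (any two are nested or disjoint), and $\emptyset$ cannot belong to an intersecting pair either; hence in every intersecting pair $(A,B)$ at least one member lies outside $\A_i\cup\{\emptyset\}$, so one of $b_i(A),-p_i(B)$ is $\infty$ and the submodular, supermodular, and cross inequalities all hold vacuously. The remaining and, in my view, main technical point is the identity $\F(p_i,b_i)=\{X : X\in\I_i,\ E\setminus X\in\I_i^{k-1}\}$, which reduces by the translation above to the following key lemma: for a laminar matroid induced from $(\A,q)$ and a positive integer $\ell$, a set $Y\subseteq E$ belongs to $\I^{\ell}$ if and only if $|Y\cap A|\le \ell\,q(A)$ for every $A\in\A$; equivalently, $M^{\ell}$ is the laminar matroid induced from $(\A,\ell q)$. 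Necessity follows from $|Y\cap A|=\sum_j|Y_j\cap A|\le \ell\,q(A)$; for sufficiency I would induct on $|\A|$, fixing a maximal $A^\ast\in\A$, splitting $Y$ into $Y\cap A^\ast$ and $Y\setminus A^\ast$ whose constraint systems decouple by laminarity, applying the inductive hypothesis to each piece on its respective sub-laminar family, and concatenating the resulting color classes.

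With the key lemma in hand, Proposition~\ref{prop:g-matroid_approach} supplies $X_1\in\I_1\cap\I_2$ with $E\setminus X_1\in\I_1^{k-1}\cap\I_2^{k-1}$. I would conclude by induction on $k$: the base case $k=1$ is trivial since $E\in\I_1\cap\I_2$ by hypothesis, and for the inductive step the restrictions $M_i|(E\setminus X_1)$ remain laminar matroids (with laminar family $\{A\cap(E\setminus X_1):A\in\A_i\}$ and the inherited capacities), so the inductive hypothesis yields a partition of $E\setminus X_1$ into $k-1$ common independent sets $X_2,\ldots,X_k$ of $M_1$ and $M_2$, and $\{X_1,X_2,\ldots,X_k\}$ is the required partition of $E$.
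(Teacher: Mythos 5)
Your choice of $(p_i,b_i)$, the vacuous verification of intersecting paramodularity via laminarity, the translation of $\F(p_i,b_i)=\set{X\mid X\in\I_i,\ E\setminus X\in\I_i^{k-1}}$ into the key lemma ``$M^{\ell}$ is the laminar matroid induced from $(\A,\ell q)$,'' and the outer induction on $k$ using Proposition~\ref{prop:g-matroid_approach} together with closure of laminar matroids under restriction all coincide with the paper's proof (Lemmas~\ref{lem:restriction-laminar-matroid}--\ref{lem:g-polymatroid-approach-laminar}). The one place where you diverge is the sufficiency direction of the key lemma, and there your sketch has a genuine gap.

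Your induction on $|\A|$ splits at a maximal $A^{\ast}\in\A$ and correctly observes that the constraints strictly inside $A^{\ast}$ decouple from those on members of $\A$ disjoint from $A^{\ast}$, but it does not account for the constraint $|Y_j\cap A^{\ast}|\le q(A^{\ast})$ at $A^{\ast}$ itself. If you recurse on the family $\set{A\in\A\mid A\subseteq A^{\ast}}$, the parameter $|\A|$ need not decrease (e.g.\ when every member of $\A$ lies inside $A^{\ast}$); if you instead recurse on $\set{A\in\A\mid A\subsetneq A^{\ast}}$, the inductive hypothesis returns \emph{some} partition of $Y\cap A^{\ast}$ into $\ell$ classes satisfying the inner constraints, with no control on the sizes of the classes. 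Already for $\A=\{A^{\ast}\}$ the inner family is empty, the inductive hypothesis permits an arbitrary partition, and a class of size greater than $q(A^{\ast})$ violates the one constraint you need. The repair is to prove a stronger, ``balanced'' statement: a partition in which every class meets every $A\in\A$ in at most $\lceil |Y\cap A|/\ell\rceil$ elements. That is exactly what the paper's proof of Lemma~\ref{lem:sum-of-laminar-matroid} delivers, by ordering the elements of $Y$ so that each $Y\cap A$ is a consecutive block (depth-first search on the tree representation of the laminar family) and assigning indices to classes round-robin modulo $\ell$. With that fix --- or with your induction rerun under the strengthened balanced hypothesis, matching up classes carefully when concatenating --- the rest of your argument goes through as in the paper.
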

In the rest of this subsection, 
we present an alternative proof for this theorem via the generalized-polymatroid approach. 
We first observe some properties of laminar matroids. 
It is known and can be easily observed that 
the class of laminar matroids is closed under taking restrictions.

\begin{lemma}\label{lem:restriction-laminar-matroid}
Let $M=(E,\I)$ be a laminar matroid induced from a laminar family $\A$ and 
a capacity function $q:\A\to \Zp$. Then for any subset $S\subseteq E$,
the restriction $M|S$ of $M$ to $S$ is 
a laminar matroid induced from 
a laminar family \changeF{$\A_{S}:=\set{S' \subseteq S\mid \mbox{$S'=A\cap S$ for some $A\in \A$}}$}
and 
a capacity function $q_S \colon \A_S \to \Zp$ defined by 
\changeF{$q_S(S') = \min\set{q(A) \mid A \in \A,~S \cap A = S'}$}. 
\end{lemma}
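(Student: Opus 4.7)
The plan is to verify two things in turn: that $\A_S$ is a laminar family on $S$, and that the matroid $M'$ induced on $S$ from $\A_S$ and $q_S$ has exactly the same independent sets as the restriction $M|S$.

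First I would check laminarity of $\A_S$. Take $S'_1, S'_2 \in \A_S$, so $S'_i = A_i \cap S$ for some $A_i \in \A$. Since $\A$ is laminar, one of $A_1 \subseteq A_2$, $A_2 \subseteq A_1$, or $A_1 \cap A_2 = \emptyset$ holds. Intersecting each of these relations with $S$ gives respectively $S'_1 \subseteq S'_2$, $S'_2 \subseteq S'_1$, or $S'_1 \cap S'_2 = \emptyset$, so $\A_S$ is laminar. Note that $q_S$ is well defined because the minimum is taken over a nonempty finite subset of $\Zp$ for each $S' \in \A_S$.

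Next I would match the two independence systems. Let $\I' = \{X \subseteq S \mid |X \cap S'| \leq q_S(S') \text{ for all } S' \in \A_S\}$ be the independent sets of the laminar matroid induced by $(\A_S, q_S)$, and recall $(\I|S) = \{X \subseteq S \mid |X \cap A| \leq q(A) \text{ for all } A \in \A\}$. For the inclusion $\I|S \subseteq \I'$, take $X \in \I|S$ and any $S' \in \A_S$; for every $A \in \A$ with $A \cap S = S'$ we have $|X \cap S'| = |X \cap A \cap S| = |X \cap A| \leq q(A)$ because $X \subseteq S$, and taking the minimum over such $A$ gives $|X \cap S'| \leq q_S(S')$. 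Conversely, for $X \in \I'$ and any $A \in \A$, setting $S' := A \cap S \in \A_S$ we have $|X \cap A| = |X \cap A \cap S| = |X \cap S'| \leq q_S(S') \leq q(A)$ by the definition of $q_S$ as a minimum, so $X \in \I|S$.

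The argument is essentially a bookkeeping exercise; there is no real obstacle, but the one point requiring care is the definition of $q_S(S')$ as a \emph{minimum} over all $A \in \A$ cutting to the same trace $S'$ on $S$. This is crucial both for the well-definedness of the induced matroid (several distinct members of $\A$ may intersect $S$ in the same set, and the induced laminar matroid must enforce the tightest of their capacities) and for the inequality $q_S(S') \leq q(A)$ used in the reverse inclusion above. Once this is observed, $\I' = \I|S$ follows, and together with the laminarity of $\A_S$ this shows that $M|S$ is the laminar matroid induced from $\A_S$ and $q_S$, as claimed.
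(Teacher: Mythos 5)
Your proof is correct. The paper itself gives no proof of this lemma (it is introduced with ``It is known and can be easily observed that the class of laminar matroids is closed under taking restrictions''), and your argument is precisely the routine verification the authors are alluding to, with the right emphasis placed on the one non-trivial point: $q_S(S')$ must be the \emph{minimum} of $q(A)$ over all $A\in\A$ with $A\cap S=S'$, since several members of $\A$ may leave the same trace on $S$ and the tightest capacity must be enforced.
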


The next lemma%
\footnote{\changeF{As will be mentioned in Remark~\ref{rem:general-case}, Lemma~\ref{lem:sum-of-laminar-matroid} can extend to a more general case, where a matroid is defined by an intersecting-submodular function. By restricting to laminar matroids, here we provide an elementary and self-contained proof.}}
states that, 
if $M=(E,\I)$ is a laminar matroid induced from a laminar family $\A$, 
then $M^k = (E, \I^k)$ is also a laminar matroid induced from $\A$.

\begin{lemma}\label{lem:sum-of-laminar-matroid}
Let $M=(E,\I)$ be a laminar matroid induced from a laminar family $\A$ and 
a capacity function $q:\A\to \Zp$. Then for a positive integer $k$,
 the matroid $M^{k}=(E,\I^{k})$ is a laminar matroid defined by
\[\I^{k}=\set{X\subseteq E|\forall A\in \A: |X\cap A|\leq k\cdot q(A)}.\]
\end{lemma}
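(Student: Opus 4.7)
I plan to prove the claimed set equality directly; the assertion that $M^{k}$ is a laminar matroid then follows automatically, since Theorem~\ref{thm:matroid_union} already guarantees that $M^{k}$ is a matroid and the right-hand side of the displayed formula is, by definition, the independence family of the laminar matroid induced by $\A$ and $k\cdot q$. The easy inclusion is immediate: if $X=X_{1}\cup\cdots\cup X_{k}$ is a partition into members of $\I$, then for every $A\in\A$,
\[
|X\cap A|=\sum_{j=1}^{k}|X_{j}\cap A|\le\sum_{j=1}^{k}q(A)=k\cdot q(A).
\]

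For the reverse inclusion I would invoke Theorem~\ref{thm:matroid_union}: membership $X\in\I^{k}$ is equivalent to $r^{k}(X)=|X|$, which, after substituting the formula $r^{k}(X)=\min_{Y\subseteq X}\{|X\setminus Y|+k\cdot r(Y)\}$ and using $|X\setminus Y|+|Y|=|X|$ for $Y\subseteq X$, simplifies to the single requirement that $k\cdot r(Y)\ge|Y|$ for every $Y\subseteq X$. To verify this inequality, I would first establish the following rank formula for the laminar matroid $M$: for any $Y\subseteq E$,
\[
r(Y)=\min\Bigl\{\,|Y\setminus \bigcup \F|+\sum_{A\in \F}q(A)\;\Big|\;\F\subseteq \A \text{ is an antichain}\,\Bigr\},
\]
which I would prove by induction on $|\A|$, decomposing $M$ as the direct sum of the laminar matroid on a maximal member of $\A$ and the laminar matroid on its complement in $E$.

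Given the rank formula, the remaining calculation is short. For any $Y\subseteq X$ and any antichain $\F\subseteq\A$, the members of $\F$ are pairwise disjoint by laminarity, so $|Y|=|Y\setminus \bigcup \F|+\sum_{A\in \F}|Y\cap A|$, and hence
\[
k\cdot\Bigl(|Y\setminus \bigcup\F|+\sum_{A\in\F}q(A)\Bigr)-|Y|=(k-1)\,|Y\setminus \bigcup\F|+\sum_{A\in\F}\bigl(k\cdot q(A)-|Y\cap A|\bigr).
\]
Since $k\ge 1$ and $|Y\cap A|\le|X\cap A|\le k\cdot q(A)$ by hypothesis, both terms on the right are nonnegative; minimizing over $\F$ yields $k\cdot r(Y)\ge|Y|$, as required.

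The main obstacle I anticipate is establishing the laminar-matroid rank formula in a clean, self-contained manner. Although it is a standard consequence of the integrality of laminar constraint systems (or LP duality on the tree associated with $\A$), giving an elementary proof calls for a careful induction on the laminar tree together with an exchange-type argument on maximum independent sets. An alternative route is to prove the reverse inclusion by induction on $k$, extracting at each step an independent set $Z\subseteq X$ with $\max\{0,|X\cap A|-(k-1)q(A)\}\le|Z\cap A|\le q(A)$ for every $A\in\A$ and applying the inductive hypothesis to $X\setminus Z$; however, the existence of such a $Z$ is itself a laminar feasibility assertion that requires essentially the same machinery.
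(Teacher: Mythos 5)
Your route is genuinely different from the paper's. The paper proves the sufficiency constructively: it orders the elements of $X$ so that $X\cap A$ occupies consecutive indices for every $A\in \A$ (via a depth-first traversal of the laminar tree) and then assigns elements to $Y_1,\dots,Y_k$ round-robin, so that $|Y_j\cap A|\le \lceil |X\cap A|/k\rceil\le q(A)$. This is elementary and self-contained, using nothing beyond the laminar structure. You instead go through Theorem~\ref{thm:matroid_union}, reduce membership in $\I^k$ to the inequality $k\cdot r(Y)\ge |Y|$ for all $Y\subseteq X$, and discharge that via a min-formula for the rank of a laminar matroid. This is essentially the argument the paper itself carries out in Remark~\ref{rem:general-case} for the more general class of matroids defined by intersecting-submodular functions: your antichain rank formula is exactly Edmonds' formula \eqref{eq:truncation1} specialized to the laminar case (pairwise disjoint members of $\A$ with finite capacity are precisely the antichains). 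So your approach buys generality -- it proves $\I_b^k=\I_{k\cdot b}$ for any intersecting-submodular $b$ -- at the price of invoking the matroid union and rank machinery, whereas the paper's footnote explains that it deliberately restricts to laminar matroids to keep the lemma's proof elementary.

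The one real gap is the piece you yourself flag: the nontrivial direction $r(Y)\ge \min\{\dots\}$ of the rank formula is only sketched, and the sketched induction does not quite close. Decomposing along a maximal member $A^*\in\A$ gives a direct sum of the matroid on $A^*$ (with family $\{A\in\A: A\subseteq A^*\}$) and the matroid on $E\setminus A^*$, but the first summand still contains the constraint on $A^*$ itself, so $|\A|$ does not decrease on that side; when $A^*$ exhausts the relevant ground set the direct-sum step is vacuous. You need a second reduction: peel off the top constraint as a rank-$q(A^*)$ truncation of the matroid induced by $\{A\in\A: A\subsetneq A^*\}$, and use that truncating at $t$ replaces $r'(Y)$ by $\min\{t,r'(Y)\}$, which matches adjoining the antichain $\{A^*\}$ to the minimization. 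With that case added (or by simply citing Edmonds \cite{Edmonds70} for \eqref{eq:truncation1}, as Remark~\ref{rem:general-case} does), your argument is complete; the final computation showing $k\bigl(|Y\setminus\bigcup\F|+\sum_{A\in\F}q(A)\bigr)\ge|Y|$ is correct as written.
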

\begin{proof}
We show that, for any $X\subseteq E$,
there exists a partition $\{Y_{1},Y_{2},\dots,Y_{k}\}$ of $X$ with $Y_{j}\in \I~(j=1,\dots,k)$
if and only if $|X\cap A|\leq k\cdot q(A)$ for any $A\in \A$.
The necessity is clear, because each $Y_{j}$ satisfies $|Y_{j}\cap A|\leq q(A)$ for any $A\in \A$.
For the sufficiency, suppose $|X\cap A|\leq k\cdot q(A)$ for any $A\in \A$.
Let $X=\{e_{1},e_{2},\dots,e_{|X|}\}$ (i.e., give indices for the elements in $X$),
so that for all $A\in \A$ the elements in $X\cap A$ have consecutive indices.
This can be done easily because $\A$ is a laminar family%
\footnote{Let $\A_{X}=\{X\}\cup \set{X\cap A|A\in \A}\cup \set{\{e\}|e\in X}$. 
Since $\A$ is laminar, $\A_{X}$ is also laminar.
Let $T$ be a tree representation of $\A_{X}$, i.e., the node sets of $T$ is $\A_{X}$ and 
a node $A$ is a child of $A'$ if $A\subsetneq A'$ and there is no $A''$ with $A\subsetneq A''\subsetneq A'$.
Then each leaf is the singleton of an element in $X$.
Let $X=\{e_{1},e_{2},\dots,e_{|X|}\}$ so that the indices represent 
the order in which the corresponding leaves are found in depth-first search from the root node $X$.
These indices satisfy the required condition.}.
For each $j\in\{1,2,\dots,k\}$, 
let \mbox{$Y_{j}=\set{e_{\ell}\in X \mid \ell = j \mod k}$}.
Then, $\{Y_{1},Y_{2},\dots,Y_{k}\}$ is a partition of $X$, and,
for each $Y_{j}$ and $A\in \A$, we have
$|Y_{j}\cap A|\leq \lceil |X\cap A|/k \rceil$ by the definition of the indices.
Because $|X\cap A|\leq k\cdot q(A)$, this implies $|Y_{j}\cap A|\leq q(A)$ for all $A\in \A$.
Thus, we have $Y_{j}\in \I$ for each $j\in \{1,2,\dots,k\}$.
\end{proof}

The next lemma provides an integral intersecting-paramodular pair $(p,b)$ satisfying the condition in Proposition \ref{prop:g-matroid_approach} 
for a laminar matroid. 

\begin{lemma}\label{lem:g-polymatroid-approach-laminar}
Let $M=(E,\I)$ be a matroid induced from a laminar family $\A$ and 
a function $q:\A\to \Zp$ and suppose $E\in \I^{k}$ for a positive integer $k$.
Define $p:2^{E}\to \Z\cup\{-\infty\}$ and $b:2^{E}\to \Z\cup \{\infty\}$ by
\begin{alignat*}{2}
&p(A)=|A|-(k-1)\cdot q(A)\qquad &(A\in \A),\\
&b(A)=q(A)  &(A\in \A),
\end{alignat*}
where $p(B)=-\infty$, $b(B)=\infty$ for all $B\in 2^{E}\setminus\A$.
Then $(p,b)$ is an integral intersecting-paramodular pair satisfying 
$\F(p,b)=\set{X\subseteq E|X\in \I,~E\setminus X\in \I^{k-1}}$.
\end{lemma}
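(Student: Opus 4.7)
The plan is to verify in turn: (i) $p(\emptyset)=b(\emptyset)=0$ together with integrality, (ii) intersecting-sub/supermodularity of $b$ and $p$, (iii) the cross inequality for intersecting pairs, and (iv) the promised characterization of $\F(p,b)$. Integrality is immediate, since $|A|$, $q(A)$, and $k-1$ are integers. For the empty-set normalization, I would adjoin $\emptyset$ to $\A$ with $q(\emptyset)=0$ (this changes neither the matroid $M$ nor the values of $p$ and $b$ on nonempty sets), which makes $p(\emptyset)=0-(k-1)\cdot 0=0$ and $b(\emptyset)=0$.

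The decisive observation, on which all of (ii) and (iii) rest, is that the laminarity of $\A$ renders the three inequalities vacuous. Two sets $A,B\subseteq E$ are intersecting precisely when each of $A\cap B$, $A\setminus B$, $B\setminus A$ is nonempty, and by definition of a laminar family no two members of $\A$ form such a pair. Hence for every intersecting $A,B$ at least one of them lies outside $\A$, and the corresponding $b$-value equals $+\infty$ or the corresponding $p$-value equals $-\infty$. In the submodular inequality for $b$, the supermodular inequality for $p$ (equivalently the submodular inequality for $-p$), and the cross inequality~\eqref{eq:cross-ineq}, this makes the left-hand side infinite, and the convention stipulated in Section~\ref{sec:pre} then causes the inequality to hold automatically.

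It remains to identify $\F(p,b)$. Since $p(B)=-\infty$ and $b(B)=+\infty$ for every $B\notin\A$, the constraints in the definition of $\F(p,b)$ are trivial outside $\A$ and reduce to
\[
|A|-(k-1)\,q(A)\;\le\;|X\cap A|\;\le\;q(A)\qquad (A\in\A).
\]
The upper bounds are exactly the condition $X\in\I$. Rewriting the lower bounds as $|(E\setminus X)\cap A|\le(k-1)\,q(A)$ for every $A\in\A$ matches verbatim the description of $\I^{k-1}$ supplied by Lemma~\ref{lem:sum-of-laminar-matroid}, so these are equivalent to $E\setminus X\in\I^{k-1}$. I do not foresee a genuine obstacle here: the whole argument hinges on the single structural fact that laminar families contain no intersecting pair, which trivializes every paramodular inequality, and on a direct unpacking of $\F(p,b)$ via Lemma~\ref{lem:sum-of-laminar-matroid}.
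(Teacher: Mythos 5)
Your proof is correct and follows essentially the same route as the paper's: laminarity guarantees that no two sets with finite $p$- or $b$-values are intersecting, so intersecting paramodularity holds vacuously, and the identification of $\F(p,b)$ is a direct unpacking via Lemma~\ref{lem:sum-of-laminar-matroid}. (Your adjustment of adjoining $\emptyset$ to $\A$ is harmless but unnecessary, since the paper's definition of an \emph{intersecting}-paramodular pair does not require the normalization $p(\emptyset)=b(\emptyset)=0$.)
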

\begin{proof}
Since $\A$ is laminar and the values of $p$ and $b$ 
are finite only on $\A$, there is no intersecting pair of subsets of $E$
both of which have finite function values.
Thus, $(p,b)$ is trivially intersecting paramodular. 

For any $X\subseteq E$, 
the condition 
$\forall A\in A: |X\cap A|\geq p(A)=|A|-(k-1)\cdot q(A)$ is equivalent to 
$\forall A\in \A: |(E\setminus X)\cap A|\leq (k-1)\cdot q(A)$,
and hence equivalent to $E\setminus X\in \I^{k-1}$ by Lemma~\ref{lem:sum-of-laminar-matroid}.
Also, $\forall A\in \A: |X\cap A|\leq b(A)=q(A)$ is equivalent to $X\in \I$. 
Thus we have $X\in \F(p,b)$ if and only if $X\in \I$ and $E\setminus X\in \I^{k-1}$ hold. 
\end{proof}
Now we show that Problem~\ref{prob:main} can be solved 
for any pair of laminar matroids using the generalized-polymatroid approach.
\begin{proof}[Proof of Theorem~\ref{thm:laminar-partition}]
We show the theorem by induction on $k$.
The case $k=1$ is trivial. Let $k\geq 2$ and suppose that the statement holds for $k-1$.
By Lemma~\ref{lem:g-polymatroid-approach-laminar}, for each $i=1,2$, there exists
an integral intersecting-paramodular pair $(p_{i},b_{i})$ such that
$\F(p_{i},b_{i})=\set{X\subseteq E|X\in \I_{i},~E\setminus X\in \I_{i}^{k-1}}$.
Then, by Proposition~\ref{prop:g-matroid_approach}, there exists 
$X\in \I_{1}\cap \I_{2}$ satisfying $E\setminus X\in \I_{1}^{k-1}\cap \I_{2}^{k-1}$.
By Lemma~\ref{lem:restriction-laminar-matroid},
the restrictions $M'_{1}:=M_{1}|(E\setminus X)$ and $M'_{2}:=M_{2}|(E\setminus X)$ are laminar.
Therefore, by the induction hypothesis,
$E\setminus X$ can be partitioned into $k-1$ common independent sets of
$M'_{1}$ and $M'_{2}$, and hence of $M_{1}$ and $M_{2}$.
Thus, $E$ can be partitioned into $k$ common independent sets.
\end{proof}

\begin{remark}\label{rem:general-case}
Here we mention an extension of Lemma~\ref{lem:sum-of-laminar-matroid}.
For an intersecting-submodular function $b:2^{E}\to \Zp\cup\{\infty\}$, 
define a family $\I_b=\set{X\subseteq E|\forall A\subseteq E: |X\cap A|\leq b(A)}$. 
Then, it is known \cite{Edmonds70} that $(E,\I_b)$ is a matroid. 
Actually, a laminar matroid is a special case of such matroids: 
When $(E,\I)$ is a laminar matroid induced by a laminar family $\A$ and a capacity function $q:\A\to \Zp$, 
then 
$\I = \I_b$ holds for an intersecting-submodular function $b$ defined by $b(A)=q(A)$ for $A\in \A$ and $b(A)=\infty$ for $A\in 2^E\setminus \A$.
Lemma~\ref{lem:sum-of-laminar-matroid} can extends to this matroid class.
That is, for any intersecting-submodular function $b$, 
the family $\I_b^k$ 
can be represented as $\I_b^k=\set{X\subseteq E|\forall A\subseteq E: |X\cap A|\leq k\cdot b(A)}$.

Now we show this claim. As shown by Edmonds \cite{Edmonds70}, the rank function of $(E,\I_b)$ is given as
\begin{equation}
\textstyle r_b(X)=\min\{|X\setminus(Y_1\cup Y_2\cup \cdots \cup Y_l)|+\sum_{i=1}^{l}b(Y_i)\mid Y_1, Y_2,\dots,Y_l \text{ are pairwise disjoint} \}.
\label{eq:truncation1}
\end{equation}
Note that $k\cdot b$ is also an intersecting-submodular function on $E$. Hence, it defines a matroid $(E, \I_{k\cdot b})$ where 
$\I_{k\cdot b}:=\set{X\subseteq E|\forall A\subseteq E: |X\cap A|\leq k\cdot b(A)}$, 
and its rank function is given as 
\begin{equation*}
\textstyle r_{k\cdot b}(X)=\min\{|X\setminus(Y_1\cup Y_2\cup\cdots \cup Y_l)|+\sum_{i=1}^{l}k\cdot b(Y_i)\mid Y_1, Y_2,\dots,Y_l \text{ are pairwise disjoint} \}.
\label{eq:truncation2}
\end{equation*}
On the other hand, by Theorem~\ref{thm:matroid_union}, the rank function $r_b^k$ of the matroid $(E, \I_b^{k})$, is given by \eqref{eq:truncation3} using $r_b$.
Substituting \eqref{eq:truncation1} to \eqref{eq:truncation3}, we can check that $r_b^k=r_{k\cdot b}$. Thus, $\I^k_b=\I_{k\cdot b}$ is proved.

We remark that, even Lemma~\ref{lem:sum-of-laminar-matroid} extends to this matroid class,
our approach for Problem~\ref{prob:main} does not extend because Lemma~\ref{lem:g-polymatroid-approach-laminar} fails to extend to this class.
\end{remark}


\subsection{Intersection of Two Matroids without $(k+1)$-Spanned Elements}
\label{sec:k+1-spanned}
Let $M=(E,\I)$ be a matroid and $k$ be a positive integer. 
Recall that 
an element $e\in E$ is said to be {\em $k$-spanned} in $M$ 
if there exist $k$ disjoint sets spanning $e$ (including the trivial spanning set $\{e\}$).

Consider a class of matroids such that no element is $(k+1)$-spanned. 
Kotlar and Ziv \cite{KZ05} provided two cases 
for which Problem~\ref{prob:main} admits solutions. 
\begin{theorem}[Kotlar and Ziv \cite{KZ05}]\label{thm:KZ1}
Let $M_{1}=(E,\I_{1})$ and $M_{2}=(E,\I_{2})$ be two matroids with rank functions 
$r_{1}$ and $r_{2}$ and suppose $r_{1}(E)=r_{2}(E)=d$ and $|E|=k\cdot d$.
If no element of $E$ is $(k+1)$-spanned in $M_{1}$ or $M_{2}$, 
then $E$ can be partitioned into $k$ common bases.
\end{theorem}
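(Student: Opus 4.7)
The plan is induction on $k$, with base case $k=1$ immediate since $E$ is itself a common base of both matroids. For the inductive step, I invoke Proposition~\ref{prop:g-matroid_approach} with suitably chosen integral intersecting-paramodular pairs $(p_i,b_i)$ and then recurse on the restrictions $M_i|(E\setminus X)$. Any $X\in\F(p_1,b_1)\cap\F(p_2,b_2)$ delivered by the proposition is automatically a common base: from $|X|\leq r_i(E)=d$ and $|E\setminus X|\leq(k-1)r_i(E)=(k-1)d$ together with $|E|=kd$, one forces $|X|=d$.

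Modelled on the laminar case (Lemma~\ref{lem:g-polymatroid-approach-laminar}), I would try the natural candidate $b_i(A)=r_i(A)$ for all $A\subseteq E$ and
\[
p_i(A)=|A|-(k-1)r_i(A)\quad\text{for}~A\in\A_i:=\{A\subseteq E:|A|>(k-1)r_i(A)\},
\]
with $p_i(A)=-\infty$ otherwise. The $b_i$-constraints then encode $X\in\I_i$, and since $|(E\setminus X)\cap A|\leq|A|\leq(k-1)r_i(A)$ holds automatically for $A\notin\A_i$, the $p_i$-constraints combined with Theorem~\ref{thm:matroid_union} encode $E\setminus X\in\I_i^{k-1}$. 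Hence $\F(p_i,b_i)=\{X\in\I_i:E\setminus X\in\I_i^{k-1}\}$ as required. For the recursion, the restrictions $M_i|(E\setminus X)$ inherit the hypothesis: if some $e\in E\setminus X$ were $k$-spanned in $M_i|(E\setminus X)$, the $k$ disjoint subsets of $E\setminus X$ spanning $e$, together with the base $X$ itself (which spans all of $E$ in $M_i$), would form $k+1$ pairwise disjoint sets spanning $e$ in $M_i$, contradicting the hypothesis.

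The main obstacle is to verify that $(p_i,b_i)$ is intersecting-paramodular. Submodularity of $b_i=r_i$ is standard, but both the supermodularity of $p_i$ on intersecting pairs in $\A_i$ and the cross inequality
\[
r_i(A)+(k-1)r_i(B)-|A\cap B|\;\geq\;r_i(A\setminus B)+(k-1)r_i(B\setminus A)\qquad(A,B\in\A_i~\text{intersecting})
\]
require genuine work, and the analogous inequalities fail on all of $2^E$ in general. The no-$(k+1)$-spanned hypothesis should intervene precisely at this point: I expect to argue that a putative violation for intersecting $A,B\in\A_i$---both of which by definition contain more elements than $(k-1)$ times their rank allows---can be combined to exhibit a system of $k+1$ pairwise disjoint subsets of $E$ spanning some common element, contradicting the hypothesis and thereby validating both the supermodularity and the cross inequality.
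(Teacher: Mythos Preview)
Your overall plan---induction on $k$, Proposition~\ref{prop:g-matroid_approach}, and the recursion argument showing no element of $E\setminus X$ is $k$-spanned in $M_i|(E\setminus X)$---matches the paper exactly. The gap is in the choice of $p_i$. Your candidate $p_i(A)=|A|-(k-1)r_i(A)$ on $\A_i$ (and $-\infty$ elsewhere) is \emph{not} intersecting-paramodular, and the no-$(k+1)$-spanned hypothesis does not rescue it.

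First, the $-\infty$ truncation already destroys intersecting supermodularity. Take $M=U_{2,4}$ and $k=2$ (no element is $3$-spanned, and $|E|=2\cdot 2$). Then $\A_i=\{A:|A|\geq 3\}$. For $A=\{1,2,3\}$ and $B=\{2,3,4\}$ we have $p_i(A)=p_i(B)=1$, $p_i(A\cup B)=2$, but $A\cap B=\{2,3\}\notin\A_i$ so $p_i(A\cap B)=-\infty$, and the supermodular inequality $1+1\leq 2+(-\infty)$ fails. Second, even if you drop the truncation and set $p_i(A)=|A|-(k-1)r_i(A)$ everywhere (which \emph{is} supermodular), the cross inequality fails for $k\geq 3$. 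With $M=U_{2,6}$ and $k=3$ (no element is $4$-spanned), take $\tilde A=\{2,3\}$, $\tilde B=\{4,5\}$, $e=1$: both $\tilde A$ and $\tilde B$ span $e$, so
\[
\bigl(r(\tilde A+e)-r(\tilde A)\bigr)+(k-1)\bigl(r(\tilde B+e)-r(\tilde B)\bigr)=0<1,
\]
violating~\eqref{eq:cross-ineq2}. The hypothesis only forbids $k+1$ disjoint spanners, not three.

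The paper's fix is to replace $(k-1)r_i$ by the rank function $r_i^{k-1}$ of the union matroid $M_i^{k-1}$, setting $p_i(A)=|A|-r_i^{k-1}(A)$ for \emph{all} $A$ (Lemma~\ref{lem:g-polymatroid-approach-k+1}). Supermodularity is then automatic. For the cross inequality the paper reduces via Lemma~\ref{lem:local-cross-ineq} to showing, for disjoint $\tilde A,\tilde B$ and $e\notin\tilde A\cup\tilde B$, that
\[
\bigl(r_i(\tilde A+e)-r_i(\tilde A)\bigr)+\bigl(r_i^{k-1}(\tilde B+e)-r_i^{k-1}(\tilde B)\bigr)\geq 1.
\]
Here one takes a maximal $Y\in\I_i^{k-1}$ inside $\tilde B$, partitions it into $Y_1,\dots,Y_{k-1}\in\I_i$, and observes that $\{e\}$, a maximal independent subset of $\tilde A$, and the $Y_j$ are $k+1$ pairwise disjoint sets; the hypothesis forces one of the latter $k$ not to span $e$, giving the required increment. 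Your laminar template $(k-1)r_i$ coincides with $r_i^{k-1}$ only when $k=2$, which is why the analogy breaks down for larger $k$.
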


\begin{theorem}[Kotlar and Ziv \cite{KZ05}]\label{thm:KZ2}
Let $M_{1}=(E,\I_{1})$ and $M_{2}=(E,\I_{2})$ be two matroids.
If no element of $E$ is $3$-spanned in $M_{1}$ or $M_{2}$, 
then $E$ can be partitioned into two common independent sets.
\end{theorem}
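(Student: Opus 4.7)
My plan is to apply Proposition~\ref{prop:g-matroid_approach} with $k=2$. Since $\I_i^{k-1}=\I_i$, a single application of the proposition produces $X^{*}$ satisfying $X^{*},\,E\setminus X^{*}\in\I_1\cap\I_2$, so $\{X^{*},E\setminus X^{*}\}$ is already the desired partition. The hypothesis $E\in\I_1^2\cap\I_2^2$ required by the proposition follows from Lemma~\ref{lem:single_matroid}, because no element of $E$ is $3$-spanned in either matroid.

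For each $i\in\{1,2\}$, the pair I would use is the natural one, defined on every $A\subseteq E$:
\[
b_i(A):=r_i(A),\qquad p_i(A):=|A|-r_i(A),
\]
where $r_i$ is the rank function of $M_i$. Then $|X\cap A|\le b_i(A)$ for all $A$ encodes $X\in\I_i$, while $|X\cap A|\ge p_i(A)$ for all $A$ encodes $|(E\setminus X)\cap A|\le r_i(A)$, i.e., $E\setminus X\in\I_i$. Hence $\F(p_i,b_i)=\{X\subseteq E : X\in\I_i,\ E\setminus X\in\I_i\}$ as required. Integrality and $p_i(\emptyset)=b_i(\emptyset)=0$ are immediate, submodularity of $b_i$ is submodularity of the rank, and supermodularity of $p_i$ follows since $|A|$ is modular.

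The heart of the argument is verifying the cross inequality $b_i(A)-p_i(B)\ge b_i(A\setminus B)-p_i(B\setminus A)$ for intersecting $A,B$. Writing $A':=A\setminus B$, $B':=B\setminus A$, $C:=A\cap B$, it reduces to
\[
\bigl[r_i(A'\cup C)-r_i(A')\bigr]+\bigl[r_i(B'\cup C)-r_i(B')\bigr]\ \ge\ |C|.
\]
I would establish the stronger claim that $C$ admits a partition $C=T_A\sqcup T_B$ with $r_i(A'\cup T_A)=r_i(A')+|T_A|$ and $r_i(B'\cup T_B)=r_i(B')+|T_B|$; monotonicity of rank then yields the displayed inequality since $A'\cup T_A\subseteq A'\cup C$, and similarly for $B$. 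I would construct such a partition greedily: process the elements of $C$ in any order, maintaining a partial partition $(T_A^{(j)},T_B^{(j)})$ with the two rank identities. When processing a new element $e$, the three sets $\{e\}$, $A'\cup T_A^{(j)}$, $B'\cup T_B^{(j)}$ are pairwise disjoint subsets of $E$. Were both $A'\cup T_A^{(j)}$ and $B'\cup T_B^{(j)}$ to span $e$, these three sets would witness that $e$ is $3$-spanned in $M_i$, contradicting the hypothesis; hence $e$ can be added to a side that preserves the rank identity. This greedy step is the main obstacle, and its success is precisely what the no-$3$-spanned hypothesis ensures.

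Once the cross inequality is established, each $(p_i,b_i)$ is an integral intersecting-paramodular pair with the prescribed $\F(p_i,b_i)$, so Proposition~\ref{prop:g-matroid_approach} delivers the set $X^{*}$ that completes the proof.
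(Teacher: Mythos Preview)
Your proof is correct and follows essentially the same route as the paper. The paper packages the argument as Proposition~\ref{prop:induction_step} (obtained from Lemma~\ref{lem:single_matroid}, Lemma~\ref{lem:g-polymatroid-approach-k+1}, and Proposition~\ref{prop:g-matroid_approach}) and then applies it with $k=2$; your pair $(p_i,b_i)=(|A|-r_i(A),\,r_i(A))$ is exactly the $k=2$ case of the pair in Lemma~\ref{lem:g-polymatroid-approach-k+1}, and your greedy partition of $C$ is the same mechanism the paper uses, just presented directly rather than factored through the local cross-inequality reduction of Lemma~\ref{lem:local-cross-ineq} (each greedy step is precisely one instance of \eqref{eq:cross-ineq2}).
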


Note that, in Theorems~\ref{thm:KZ1} and \ref{thm:KZ2},
the condition $E\in \I_{1}^{k}\cap\I_{2}^{k}$ is not
explicitly assumed. 
However, 
it 
can be easily proved by induction on $|E|$.
\begin{lemma}[Kotlar and Ziv \cite{KZ05}]\label{lem:single_matroid}
If no element of a matroid $M=(E,\I)$ is $(k+1)$-spanned, then $E\in \I^{k}$.
\end{lemma}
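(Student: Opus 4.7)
The plan is to proceed by induction on $|E|$, with the trivial base case $|E|=0$. For the inductive step, I would pick any element $e \in E$, apply the induction hypothesis to the restriction $M|(E-e)$ to obtain a partition of $E-e$ into $k$ independent sets, and then either augment one of these parts with $e$ or derive a contradiction from the hypothesis. The first order of business is to check that $M|(E-e)$ inherits the ``no $(k+1)$-spanned element'' condition: restriction preserves the rank function on subsets of $E-e$, so any $k+1$ disjoint subsets of $E-e$ witnessing that some $f \in E-e$ is $(k+1)$-spanned in $M|(E-e)$ would also witness this in $M$, contradicting the hypothesis on $M$. Hence the induction hypothesis produces a partition $\{X_1, X_2, \dots, X_k\}$ of $E-e$ with each $X_j \in \I$.

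The second step is a clean dichotomy on whether $e$ can be absorbed into one of the $X_j$. If $X_j + e \in \I$ for some $j$, then replacing $X_j$ by $X_j + e$ yields the required partition of $E$ into $k$ independent sets and the induction closes. Otherwise, $X_j + e \notin \I$ for every $j$, and since each $X_j$ is itself independent this is equivalent to $r(X_j + e) = r(X_j)$, i.e., $X_j$ spans $e$ in $M$. Combined with the trivial spanning set $\{e\}$, which is disjoint from each $X_j \subseteq E-e$, this produces $k+1$ pairwise disjoint sets all spanning $e$, contradicting the assumption that $e$ is not $(k+1)$-spanned.

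There is no genuine obstacle in the argument. The only point demanding care is the convention flagged in the preceding discussion that the trivial set $\{e\}$ counts as a spanning set of $e$; without it the final contradiction would supply only $k$ disjoint spanning sets of $e$ rather than $k+1$, and the inductive step would collapse. With this convention in force the case split closes the induction immediately.
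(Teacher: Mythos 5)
Your argument is correct and follows exactly the route the paper indicates: the paper gives no explicit proof of this lemma, remarking only that it ``can be easily proved by induction on $|E|$,'' and your induction on $|E|$ --- restrict to $E-e$, partition, then either absorb $e$ into some part or obtain $k+1$ disjoint spanning sets $\{e\}, X_1,\dots,X_k$ --- is precisely that easy induction. Your observation that the convention counting $\{e\}$ as a trivial spanning set is what makes the final count reach $k+1$ is also the right point to flag, and it matches the parenthetical convention stated in Section~\ref{sec:k+1-spanned}.
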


We provide unified proofs for Theorems~\ref{thm:KZ1} and \ref{thm:KZ2} via the generalized-polymatroid approach, 
by constructing integral paramodular pairs satisfying \eqref{EQpb} in Proposition~\ref{prop:g-matroid_approach}.
We first show that the cross-inequality condition, which is required for  paramodularity,
is equivalent to a seemingly weaker condition.
\begin{lemma}\label{lem:local-cross-ineq}
A pair $(p,b)$ of set functions satisfies 
the cross inequality \eqref{eq:cross-ineq} for any $A, B\subseteq E$ if and only if it satisfies 
the following inequality for every pair of disjoint subsets
$\tilde{A},\tilde{B}\subseteq E$ and \changeF{every} element $e\in E\setminus(\tilde{A}\cup \tilde{B})$: 
\begin{equation}
b(\tilde{A}+e)-b(\tilde{A})\geq p(\tilde{B}+e)-p(\tilde{B}). \label{eq:cross-ineq2}
\end{equation}
\end{lemma}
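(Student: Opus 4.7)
The plan is to handle the two directions of the biconditional separately. The ``only if'' direction is immediate: given disjoint $\tilde{A}, \tilde{B}$ and $e \in E \setminus (\tilde{A} \cup \tilde{B})$, I would apply \eqref{eq:cross-ineq} with $A := \tilde{A}+e$ and $B := \tilde{B}+e$. The disjointness of $\tilde{A}$ and $\tilde{B}$ together with $e$ lying outside both gives $A \setminus B = \tilde{A}$ and $B \setminus A = \tilde{B}$, so \eqref{eq:cross-ineq} rearranges verbatim to \eqref{eq:cross-ineq2}.

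For the converse, fix arbitrary $A, B \subseteq E$ and enumerate $A \cap B = \{e_1, e_2, \ldots, e_m\}$ (the case $m=0$ makes \eqref{eq:cross-ineq} trivial). The idea is to telescope \eqref{eq:cross-ineq2} along two set sequences that move in opposite directions through the common elements. Define
\[
A_i := (A \setminus B) \cup \{e_1, \ldots, e_i\}, \qquad B_i := B \setminus \{e_1, \ldots, e_i\} \qquad (i = 0, 1, \ldots, m),
\]
so that $A_0 = A \setminus B$, $A_m = A$, $B_0 = B$, and $B_m = B \setminus A$. For each $i \in \{1, \ldots, m\}$ I would take $\tilde{A} := A_{i-1}$, $\tilde{B} := B_i$, $e := e_i$ in \eqref{eq:cross-ineq2}. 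A quick check shows these are admissible: $A_{i-1}$ and $B_i$ are disjoint because $A \setminus B$ is disjoint from $B$ and $\{e_1,\ldots,e_{i-1}\}$ has been removed from $B_i$ by construction, and $e_i$ lies in neither $A_{i-1}$ nor $B_i$.

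Applying \eqref{eq:cross-ineq2} therefore yields $b(A_i) - b(A_{i-1}) \geq p(B_{i-1}) - p(B_i)$ for every $i$, and summing from $i=1$ to $m$ telescopes to $b(A) - b(A \setminus B) \geq p(B) - p(B \setminus A)$, which is exactly \eqref{eq:cross-ineq}. The main obstacle is designing the telescoping so that the disjointness hypothesis of \eqref{eq:cross-ineq2} is preserved at every step: a naive chain in which both sequences grow simultaneously would force $\tilde{A}$ and $\tilde{B}$ to share the elements $\{e_1,\ldots,e_{i-1}\}$, violating disjointness. Letting one sequence absorb the elements of $A \cap B$ while the other releases them is the essential trick. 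Finally, the case of infinite function values requires no extra work, since the stated convention that the cross and local inequalities hold whenever the left-hand side is infinite propagates cleanly through each step of the telescoping.
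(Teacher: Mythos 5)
Your proof is correct and follows essentially the same route as the paper: the same substitution $A=\tilde A+e$, $B=\tilde B+e$ for necessity, and the same telescoping for sufficiency (your $A_{i-1}$ and $B_i$ are exactly the paper's $\tilde A_i$ and $\tilde B_i$, just indexed from the ``before'' sets rather than from $\ell$). No issues.
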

\begin{proof}
The necessity is obvious, 
since \eqref{eq:cross-ineq2} is obtained by substituting $A=\tilde{A}+e$ and $B=\tilde{B}+e$ into \eqref{eq:cross-ineq}.
For sufficiency, we show \eqref{eq:cross-ineq} for arbitrary $A, B\subseteq E$ under the assumption of \eqref{eq:cross-ineq2}.
Let $A\cap B=\{e_{1},e_{2},\dots,e_{m}\}$ where $m=|A\cap B|$ and 
define $\tilde{A}_{\ell}=(A\setminus B)\cup \{e_{1},e_{2},\dots,e_{\ell-1}\}$ and 
$\tilde{B}_{\ell}=(B\setminus A)\cup \{e_{\ell+1},e_{\ell+2},\dots,e_{m}\}$ for each $\ell\in\{1,2,\dots,m\}$. 
Then $\tilde{A}_{\ell}$ and $\tilde{B}_{\ell}$ are disjoint and $e_{\ell}\in E\setminus(\tilde{A}_{\ell}\cup \tilde{B}_{\ell})$,
and hence we have $b(\tilde{A}_{\ell}+e_{\ell})-b(\tilde{A}_{\ell})\geq p(\tilde{B}_{\ell}+e_{\ell})-p(\tilde{B}_{\ell})$ for $\ell=1,2,\dots, m$.
As we have $\tilde{A}_{1}=A\setminus B$, $\tilde{A}_{m}+e_{m}=A$, $\tilde{B}_{1}+e_{1}=B$, and $\tilde{B}_{m}=B\setminus A$, 
it follows that 
\[b(A)-b(A\setminus B)=\sum_{\ell=1}^{m}b(\tilde{A}_{\ell}+e_{\ell})-b(\tilde{A}_{\ell})
\geq \sum_{\ell=1}^{m}p(\tilde{B}_{\ell}+e_{\ell})-p(\tilde{B}_{\ell})=p(B)-p(B\setminus A).\]
Thus, $A$ and $B$ satisfy the cross inequality \eqref{eq:cross-ineq}. 
\end{proof}

Lemma \ref{lem:local-cross-ineq} states that, 
the range of the subsets $A,B \subseteq E$ in 
the cross inequality \eqref{eq:cross-ineq} can be narrowed so that $|A \cap B|=1$. 
\change{The above proof argument is the same as the standard proof argument for characterizing submodularity by the \emph{local submodularity} (see, e.g.,\ \cite[Theorem 44.1]{Schr03}) or 
the \emph{diminishing return property} (see, e.g.,\ \cite{SY15,SY16}). }
We also remark that the submodularity of $b$ and the supermodularity of $p$ are 
not assumed in Lemma \ref{lem:local-cross-ineq}. 

Now an integral paramodular pair satisfying the condition in Proposition \ref{prop:g-matroid_approach} is constructed as follows. 

\begin{lemma}\label{lem:g-polymatroid-approach-k+1}
Let $M=(E,\I)$ be a matroid with rank function $r:2^{E}\to \Zp$.
For a positive integer $k$, suppose that no element is $(k+1)$-spanned in $M$.
Define $p:2^{E}\to \Z$ and $b:2^{E}\to \Z$ by
\begin{alignat*}{2}
p(A)&=|A|-r^{k-1}(A)\quad&(A\subseteq E),\\
b(A)&=r(A)               &(A\subseteq E).
\end{alignat*}
Then  $(p,b)$ is an integral paramodular pair such that
$\F(p,b)=\set{X\subseteq E|X\in \I,~E\setminus X\in \I^{k-1}}$. 
\end{lemma}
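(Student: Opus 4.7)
The plan is to verify the requirements (i)--(iii) for $(p,b)$ to be integral paramodular, and then compute $\F(p,b)$. Integrality, $p(\emptyset)=b(\emptyset)=0$, and the submodularity of $b=r$ are immediate from standard matroid theory. Supermodularity of $p$ will follow by writing $p(A)=|A|-r^{k-1}(A)$ as a modular function minus a submodular one: by Theorem~\ref{thm:matroid_union}, $r^{k-1}$ is the rank function of $M^{k-1}$, hence submodular. The description of $\F(p,b)$ is similarly routine: $|X\cap A|\le b(A)=r(A)$ for every $A\subseteq E$ is equivalent to $X\in\I$, while $p(A)\le |X\cap A|$ rearranges to $|(E\setminus X)\cap A|\le r^{k-1}(A)$, which by the same rank characterization applied to $M^{k-1}$ is exactly $E\setminus X\in\I^{k-1}$.

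The main obstacle is the cross inequality. I plan to invoke Lemma~\ref{lem:local-cross-ineq} to reduce it to the local form: for disjoint $\tilde A,\tilde B\subseteq E$ and $e\in E\setminus(\tilde A\cup\tilde B)$,
\begin{equation*}
\bigl(r(\tilde A+e)-r(\tilde A)\bigr)+\bigl(r^{k-1}(\tilde B+e)-r^{k-1}(\tilde B)\bigr)\ge 1.
\end{equation*}
Since each rank increment is $0$ or $1$, the only nontrivial case is when both are $0$, i.e.\ $\tilde A$ spans $e$ in $M$ and $\tilde B$ spans $e$ in $M^{k-1}$; the aim is then to contradict the hypothesis that $e$ is not $(k+1)$-spanned in $M$.

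To translate the $M^{k-1}$-spanning condition into many disjoint $M$-spanning sets, I would take a maximum independent set $B_0\subseteq\tilde B$ of $M^{k-1}$ and fix any partition $B_0=D_1\sqcup\cdots\sqcup D_{k-1}$ into independent sets of $M$. Since $r^{k-1}(\tilde B+e)=r^{k-1}(\tilde B)=|B_0|$, the set $B_0+e$ is dependent in $M^{k-1}$; were some $D_j+e$ independent in $M$, replacing $D_j$ by $D_j+e$ would yield a partition of $B_0+e$ into $k-1$ independent sets of $M$, a contradiction. Hence every $D_j$ spans $e$ in $M$. Together with $\tilde A$ and the trivial spanning set $\{e\}$, these $D_j$ give $k+1$ pairwise disjoint sets each spanning $e$ (using $\tilde A\cap\tilde B=\emptyset$, $D_j\subseteq\tilde B$, and $e\notin\tilde A\cup\tilde B$), witnessing that $e$ is $(k+1)$-spanned and contradicting the hypothesis. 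The delicate point is precisely this extraction step, which leverages the matroid union structure of $M^{k-1}$; the other parts of the lemma are essentially bookkeeping.
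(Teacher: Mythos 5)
Your proposal is correct and follows essentially the same route as the paper: reduce the cross inequality to the local form via Lemma~\ref{lem:local-cross-ineq}, then split a maximal independent set of $M^{k-1}$ inside $\tilde{B}$ into $k-1$ independent sets of $M$ and combine them with (a spanning subset of) $\tilde{A}$ and $\{e\}$ to invoke the no-$(k+1)$-spanned hypothesis. The only cosmetic difference is that you argue by contradiction (both increments zero forces $k+1$ disjoint spanning sets) where the paper argues directly that $e$ fails to be spanned by at least one of the $k$ candidate sets; the remaining bookkeeping for paramodularity and $\F(p,b)$ matches the paper.
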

\begin{proof}
It directly follows from the definitions of $p$ and $b$ that 
(i) $p(\emptyset)=b(\emptyset)=0$, 
(ii) $p$ is supermodular, $b$ is submodular.
Then, to prove that $(p,b)$ is paramodular, it remains to show 
the cross inequality \eqref{eq:cross-ineq} for any $A,B\subseteq E$.
By Lemma~\ref{lem:local-cross-ineq}, it suffices to show \eqref{eq:cross-ineq2} 
for any disjoint $\tilde{A},\tilde{B}\subseteq E$ and \changeF{any} element $e\in E\setminus(\tilde{A}\cup \tilde{B})$,
where \eqref{eq:cross-ineq2} is rephrased as follows by the definitions of $p$ and $b$:
\[\left(r(\tilde{A}+e)-r(\tilde{A})\right)+\left(r^{k-1}(\tilde{B}+e)-r^{k-1}(\tilde{B})\right)\geq 1.\]
Take a maximal independent set $X$ of $M$ subject to $X\subseteq \tilde{A}$ and
a maximal independent set $Y$ of $M^{k-1}$ subject to $Y\subseteq \tilde{B}$.
It is sufficient to show 
$X+e\in \I$ or $Y+e\in \I^{k-1}$, because 
they respectively imply $r(\tilde{A}+e)\geq r(\tilde{A})+1$ or
$r^{k-1}(\tilde{B}+e)\geq r^{k-1}(\tilde{B})+1$.
Note that $Y\in \I^{k-1}$ can be partitioned into $k-1$ 
independent sets $Y_{1},Y_{2},\dots,Y_{k-1}\in \I$. Also, 
$k+1$ subsets $\{e\}, X, Y_{1}, Y_{2},\dots,Y_{k-1}$ are all disjoint.
Because no element is $(k+1)$-spanned in $M$, it follows that $e$ is not spanned by at least one of $X, Y_{1}, Y_{2},\dots,Y_{k-1}$.
Note that $Y_{j}+e\in \I$ for some $j$ implies $Y+e\in \I^{k-1}$ by the definition of $\I^{k-1}$.
We then have $X+e\in \I$ or $Y+e\in \I^{k-1}$. Thus, the paramodularity of $(p,b)$ is proved.

We next show $\F(p,b)=\set{X\subseteq E|X\in \I,~E\setminus X\in \I^{k-1}}$.
For any $X\subseteq E$, the condition 
$\forall A\subseteq E: |X\cap A|\geq p(A)=|A|-r^{k-1}(A)$ is equivalent to 
$\forall A\subseteq E: |(E\setminus X)\cap A|\leq r^{k-1}(A)$,
and hence equivalent to $E\setminus X\in \I^{k-1}$.
Also, $\forall A\subseteq E: |X\cap A|\leq b(A)=r(A)$ is equivalent to $X\in \I$. 
Thus we have $X\in \F(p,b)$ if and only if $X\in \I$ and $E\setminus X\in \I^{k-1}$ hold. 
\end{proof}

Combining Lemmas~\ref{lem:single_matroid}, \ref{lem:g-polymatroid-approach-k+1} and Proposition~\ref{prop:g-matroid_approach} 
yields the following proposition.
\begin{proposition}\label{prop:induction_step}
Let $M_{1}=(E,\I_{1})$ and $M_{2}=(E,\I_{2})$ be two matroids.
If no element of $E$ is $(k+1)$-spanned in $M_{1}$ or $M_{2}$, 
then there exists a subset $X\subseteq E$ such that $X\in \I_{1}\cap\I_{2}$ and $E\setminus X\in \I_{1}^{k-1}\cap \I_{2}^{k-1}$.
\end{proposition}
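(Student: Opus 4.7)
The plan is simply to combine three results already in hand: Lemma~\ref{lem:single_matroid}, Lemma~\ref{lem:g-polymatroid-approach-k+1}, and Proposition~\ref{prop:g-matroid_approach}. Because no element of $E$ is $(k+1)$-spanned in either matroid, Lemma~\ref{lem:single_matroid} applied to each $M_i$ immediately gives $E\in\I_1^k\cap\I_2^k$, which is the global hypothesis required by Proposition~\ref{prop:g-matroid_approach}.

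Next, for each $i=1,2$, I would invoke Lemma~\ref{lem:g-polymatroid-approach-k+1} (whose sole hypothesis is precisely that no element is $(k+1)$-spanned in $M_i$) to produce an integral paramodular pair $(p_i,b_i)$ with
\[
\F(p_i,b_i)=\{X\subseteq E\mid X\in\I_i,\ E\setminus X\in\I_i^{k-1}\}.
\]
Since every paramodular pair is a fortiori intersecting paramodular, these pairs are eligible input for Proposition~\ref{prop:g-matroid_approach}. Feeding $(p_1,b_1)$ and $(p_2,b_2)$ into that proposition produces a subset $X\subseteq E$ with $X\in\I_1\cap\I_2$ and $E\setminus X\in\I_1^{k-1}\cap\I_2^{k-1}$, which is exactly the claim.

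There is essentially no genuine obstacle to overcome: the difficult step, namely verifying the cross inequality for $(p_i,b_i)$ via the $(k+1)$-spanned hypothesis and the partition argument on $Y=Y_1\cup\cdots\cup Y_{k-1}$, was already carried out inside Lemma~\ref{lem:g-polymatroid-approach-k+1}, and the g-polymatroid intersection machinery was encapsulated in Proposition~\ref{prop:g-matroid_approach}. The one conceptual point I would highlight in the write-up is the independence of the construction: the paramodular pair for $M_i$ is built purely from the rank function $r_i$ and the union-matroid rank function $r_i^{k-1}$, with no reference to the other matroid. This is the feature noted in Remark~\ref{REMindependent}, and it is what will later permit mixing a laminar matroid with a Kotlar--Ziv matroid in Section~\ref{sec:new}.
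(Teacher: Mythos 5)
Your proposal is correct and is exactly the paper's argument: the paper states Proposition~\ref{prop:induction_step} as an immediate consequence of combining Lemma~\ref{lem:single_matroid}, Lemma~\ref{lem:g-polymatroid-approach-k+1}, and Proposition~\ref{prop:g-matroid_approach}, which is precisely the chain you describe. Nothing is missing.
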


Using this proposition, we can provide unified proofs for Theorems~\ref{thm:KZ1} and \ref{thm:KZ2}.

\begin{proof}[Proof of Theorem~\ref{thm:KZ1}]
By Proposition~\ref{prop:induction_step}, there is $X\in \I_{1}\cap\I_{2}$ with $E\setminus X\in \I_{1}^{k-1}\cap \I_{2}^{k-1}$.
Because $r_{1}(E)=r_{2}(E)=d$ and $|E|=k\cdot d$, 
the subsets $X$ and  $E\setminus X$
should be common bases of $(M_{1}, M_{2})$ and $(M_{1}^{k-1}, M_{2}^{k-1})$, respectively.
For each matroid $M_{i}$ ($i=1,2$), since every element in $E\setminus X$ is spanned by $X$ 
but not $(k+1)$-spanned,
we see that no element in $E\setminus X$ is $k$-spanned in $M_{i}|(E\setminus X)$.
Thus, $X\in \I_{1}\cap \I_{2}$ and restrictions $M_{1}|(E\setminus X)$ and $M_{2}|(E\setminus X)$ satisfy 
the assumption of Theorem~\ref{thm:KZ1} with $k$ replaced by $k-1$. 
By induction, $E\setminus X$ can be partitioned into
$k-1$ common independent sets. Thus, the proof is completed
\end{proof}

\begin{proof}[Proof of Theorem~\ref{thm:KZ2}]
By just applying Proposition~\ref{prop:induction_step} with $k=2$, we obtain
a common independent set $X\in \I_{1}\cap\I_{2}$ satisfying $E\setminus X\in \I_{1}\cap \I_{2}$.
Thus, the proof is completed
\end{proof}

The original proofs for Theorems \ref{thm:KZ1} and \ref{thm:KZ2} \cite{KZ05} have no apparent relation. 
For these two theorems, 
we have shown unified proofs by our generalized-polymatroid approach. 
This offers a new understanding of  
the conditions in Theorems \ref{thm:KZ1} and \ref{thm:KZ2}: 
they are nothing other than conditions under which  
our induction method works.

\subsection{Intersection of a Laminar Matroid and a Matroid without $(k+1)$-Spanned Elements}
\label{sec:new}

As mentioned in Remark \ref{REMindependent}, 
our g-polymatroid approach does not require 
the two matroids to be in the same matroid class, 
and thus 
can deal with 
an arbitrary pair of matroids which have appeared in this section. 
That is, 
we can obtain a solution of Problem~\ref{prob:main} for a new class of matroid intersection, 
i.e.,\ 
the intersection of 
a laminar matroid 
and 
a matroid without $(k+1)$-spanned elements. 
The following theorems can be immediately derived from 
combining the proofs of Theorems~\ref{thm:laminar-partition}, \ref{thm:KZ1}, and \ref{thm:KZ2}. 

\begin{theorem}\label{thm:lam+KZ1}
Let 
$k$ be a positive integer,
$M_{1}=(E,\I_{1})$ be a laminar matroid such that \changeF{$E \in \I_1^k$}, 
and 
$M_{2}=(E,\I_{2})$ be a matroid with rank function $r_{2}$ such that 
$|E|=k\cdot r_{2}(E)$ and no element is $(k+1)$-spanned in $M_{2}$.
Then, 
there exists a
partition $\{X_{1},X_{2},\dots,X_{k}\}$ of $E$ such that $X_{j}\in \I_{1}\cap \I_{2}$ for each $j=1,2,\dots,k$.
\end{theorem}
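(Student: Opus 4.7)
The plan is to proceed by induction on $k$, combining the inductive templates used in the proofs of Theorems~\ref{thm:laminar-partition} and \ref{thm:KZ1}. The base case $k=1$ is trivial, since then $E\in \I_1\cap \I_2$ directly. For the inductive step I first want to extract a single common independent set $X\in \I_1\cap \I_2$ such that $E\setminus X\in \I_1^{k-1}\cap \I_2^{k-1}$, and then verify that the restrictions $M_1|(E\setminus X)$ and $M_2|(E\setminus X)$ again satisfy the hypotheses of the theorem with $k$ replaced by $k-1$, so that the induction hypothesis yields a partition of $E\setminus X$ into $k-1$ common independent sets.

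To produce $X$, I invoke Proposition~\ref{prop:g-matroid_approach}. The assumption $E\in \I_1^k$ is given, and $E\in \I_2^k$ follows from Lemma~\ref{lem:single_matroid} applied to $M_2$, so $E\in \I_1^k\cap \I_2^k$. An integral intersecting-paramodular pair satisfying \eqref{EQpb} is provided by Lemma~\ref{lem:g-polymatroid-approach-laminar} for the laminar matroid $M_1$ and by Lemma~\ref{lem:g-polymatroid-approach-k+1} for $M_2$. Proposition~\ref{prop:g-matroid_approach} then yields the desired $X$. This step is clean precisely because our framework treats the two matroids independently (see Remark~\ref{REMindependent}).

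The main work is in verifying that the restrictions still satisfy the hypotheses with parameter $k-1$. For $M_1$, Lemma~\ref{lem:restriction-laminar-matroid} ensures that $M_1|(E\setminus X)$ is a laminar matroid, and $E\setminus X\in \I_1^{k-1}$ restricts to a partition into $k-1$ sets in $\I_1|(E\setminus X)$ by construction. For $M_2$ I reuse the counting argument from the proof of Theorem~\ref{thm:KZ1}: writing $d=r_2(E)$, the independence of $X$ in $M_2$ gives $|X|\le d$, while $E\setminus X\in \I_2^{k-1}$ combined with the formula~\eqref{eq:truncation3} gives $|E\setminus X|\le (k-1)\cdot r_2(E\setminus X)\le (k-1)d$. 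The global equality $|E|=kd$ forces $|X|=d$ (so $X$ is a base of $M_2$), $r_2(E\setminus X)=d$, and $|E\setminus X|=(k-1)\cdot r_2(E\setminus X)$, which is the cardinality assumption required for the induction. Moreover, if some $e\in E\setminus X$ were $k$-spanned in $M_2|(E\setminus X)$, then the $k$ disjoint spanning subsets inside $E\setminus X$ together with the base $X$ (which spans $e$ in $M_2$) would form $k+1$ pairwise disjoint sets spanning $e$ in $M_2$, contradicting the hypothesis that no element of $M_2$ is $(k+1)$-spanned.

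I expect no conceptual difficulty: every ingredient has been prepared in the preceding subsections, and the argument is essentially a merge of the two induction steps already carried out. The mildest subtlety is the cardinality-and-spanning bookkeeping for $M_2$ described above, but it is a verbatim transcription of the corresponding passage in the proof of Theorem~\ref{thm:KZ1}, since the hypothesis $|E|=k\cdot r_2(E)$ and the non-$(k+1)$-spanned condition on $M_2$ are exactly what are needed there.
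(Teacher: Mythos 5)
Your proof is correct and follows exactly the route the paper intends: the paper gives no separate argument for Theorem~\ref{thm:lam+KZ1} beyond stating that it is ``immediately derived from combining the proofs'' of Theorems~\ref{thm:laminar-partition} and \ref{thm:KZ1}, and your write-up is precisely that combination (Proposition~\ref{prop:g-matroid_approach} with Lemmas~\ref{lem:g-polymatroid-approach-laminar} and \ref{lem:g-polymatroid-approach-k+1}, then the laminar restriction via Lemma~\ref{lem:restriction-laminar-matroid} and the cardinality/spanning bookkeeping from the proof of Theorem~\ref{thm:KZ1}). No gaps.
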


\begin{theorem}\label{thm:lam+KZ2}
Let $M_{1}=(E,\I_{1})$ be a laminar matroid such that 
$E \in \I_1^2$ and $M_{2}=(E,\I_{2})$ be a matroid \changeF{in which no element is $3$-spanned in $M_{2}$.}
Then, 
there exists a
partition $\{X_{1},X_{2}\}$ of $E$ such that $X_{1},X_2\in \I_{1}\cap \I_{2}$.
\end{theorem}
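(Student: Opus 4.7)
The plan is to apply the general scheme of Proposition~\ref{prop:g-matroid_approach} directly with $k=2$, combining the two constructions of integral intersecting-paramodular pairs already established for the two matroid classes. Since $k-1 = 1$ here, the inductive reduction used in the proofs of Theorems~\ref{thm:laminar-partition} and \ref{thm:KZ1} is not needed — a single invocation of the g-polymatroid machinery produces the whole partition.

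Concretely, I would proceed as follows. First, note that we may apply Proposition~\ref{prop:g-matroid_approach} with $k=2$ only after verifying that $E \in \I_1^2 \cap \I_2^2$. The inclusion $E \in \I_1^2$ is a direct hypothesis, and $E \in \I_2^2$ follows from Lemma~\ref{lem:single_matroid} applied to $M_2$, using the assumption that no element of $E$ is $3$-spanned in $M_2$. Second, Lemma~\ref{lem:g-polymatroid-approach-laminar} (with $k=2$) supplies an integral intersecting-paramodular pair $(p_1,b_1)$ satisfying
\[
\F(p_1,b_1) = \{\, X \subseteq E \mid X \in \I_1,\ E \setminus X \in \I_1^{1} \,\} = \{\, X \subseteq E \mid X, E\setminus X \in \I_1 \,\}.
\]
Third, Lemma~\ref{lem:g-polymatroid-approach-k+1} (again with $k=2$) supplies an integral paramodular pair $(p_2,b_2)$ satisfying the analogous equality with $\I_1$ replaced by $\I_2$.

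Applying Proposition~\ref{prop:g-matroid_approach} to these two pairs yields a subset $X \subseteq E$ with $X \in \I_1 \cap \I_2$ and $E\setminus X \in \I_1^{1} \cap \I_2^{1} = \I_1 \cap \I_2$. Setting $X_1 = X$ and $X_2 = E \setminus X$ gives the desired partition into two common independent sets.

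There is essentially no main obstacle in this proof; the only delicate point is confirming the standing hypothesis $E \in \I_2^2$ of Proposition~\ref{prop:g-matroid_approach}, which is secured by Lemma~\ref{lem:single_matroid}. This case is in a sense easier than Theorem~\ref{thm:lam+KZ1}, because for $k=2$ the residual set $E\setminus X$ already lies in $\I_1 \cap \I_2$, so no recursion on a restriction is required and one need not check that the two relevant matroid classes (laminar matroids and matroids without $(k+1)$-spanned elements) are closed under restriction and under decrementing $k$.
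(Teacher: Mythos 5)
Your proposal is correct and follows exactly the route the paper intends: the paper gives no separate proof of Theorem~\ref{thm:lam+KZ2}, stating only that it follows by combining the proofs of Theorems~\ref{thm:laminar-partition} and \ref{thm:KZ2}, and your argument is precisely that combination --- verify $E\in\I_1^2\cap\I_2^2$ via the hypothesis and Lemma~\ref{lem:single_matroid}, feed the pairs from Lemmas~\ref{lem:g-polymatroid-approach-laminar} and \ref{lem:g-polymatroid-approach-k+1} into Proposition~\ref{prop:g-matroid_approach}, and observe that for $k=2$ no recursion is needed. All steps check out, including the point that the fully paramodular pair from Lemma~\ref{lem:g-polymatroid-approach-k+1} is in particular intersecting paramodular, as Proposition~\ref{prop:g-matroid_approach} requires.
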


\subsection{Time Complexity}\label{sec:complexity}

The proof for Proposition \ref{prop:g-matroid_approach} implies a polynomial-time algorithm to solve 
Problem~\ref{prob:main} for matroids mentioned above.
Here we 
discuss the time complexity of the algorithm. 

Suppose that $M_{1}=(E,\I_{1})$ and $M_{2}=(E,\I_{2})$ satisfy the assumptions in Proposition~\ref{prop:g-matroid_approach}, 
and membership oracles of $\I_{1}$ and $\I_{2}$ are provided.
Let $\J_i:=\set{X| X\in \I_i, E\setminus X\in \I_i^{k-1}}$ for each $i=1,2$. 
We solve Problem \ref{prob:main} by $k-1$ iterations of finding $X \in \J_1 \cap \J_2$, 
which can be done efficiently in the following manner. 

By the assumption, $\J_i=\F(p_{i},b_{i})$ holds for some intersecting-paramodular pair $(p_{i},b_{i})$,
and hence $(E,\J_i)$ is a generalized matroid \cite{FT88, Tardos85} (see Lemma~\ref{lem:g-matroid} below).
\changeF{Then, it is known that $\J_i$ is a projection of the base family of another matroid \cite{Tardos85} (see also \cite{Fuj84,Fuj05})}, 
which is defined as follows. 
Let $m_{\rm max}$ and $m_{\rm min}$ be the maximum and minimum size of a subset in $\J_1\cup \J_2$ respectively, 
and let $U$ be a set disjoint from $E$ with $|U|=m_{\rm max}-m_{\rm min}$.
As shown in \cite[Theorem~2.9]{Tardos85},  the family 
\[\tilde{\B}_i=\set{X\cup V|X\in \J_i, ~V\subseteq U, ~|X\cup V|=m_{\rm max}}\]
forms the base family of a matroid on $E\cup U$, which we denote by $\tilde{M}_i=(E\cup U, \tilde{\I}_i)$.  
Then we have $\J_i=\set{E\cap B|B\in \tilde{\B}_i}$ for each $i=1,2$,
and hence $\J_1\cap \J_2=\set{E\cap B|B\in \tilde{\B}_1\cap \tilde{\B}_2}$. 

Therefore, finding $X\in \J_1\cap \J_2$ is reduced to
finding a common base of $\tilde{M}_1$ and $\tilde{M}_2$.
This can be done by a standard matroid intersection algorithm \cite{Edmonds70}, 
if membership oracles of $\tilde{\I}_1$ and $\tilde{\I}_2$ are available.
Such oracles can be efficiently implemented as follows. 

Since $\tilde{\I}_i$ consists of all subsets of the bases in $\tilde{\B}_i$,  
it follows from the definitions of $\tilde{\B}_i$ and $\J_i$ that 
\[\tilde{\I}_i=\set{X'\cup V|\exists X\in \I_i: X'\subseteq X,~|X\cup V|\leq m_{\rm max},~E\setminus X\in \I_i^{k-1} }.\] 
Then, we have $X'\cup V\in \tilde{\I}_i$ if and only if $E\setminus X'$ is partitionable into $Z, Y_1,Y_2,\dots,Y_{k-1}$ such that 
\[Z\in \I^{\ast}_i:=\set{Z' \subseteq E \setminus X'|Z'\cup X'\in \I_i, ~|Z'\cup X'\cup V|\leq m_{\rm max}}\] and $Y_j\in \I_i~(j=1,\dots, k-1)$.
Note that $\I_i^{\ast}$ is also the independent set family of a matroid (as it is obtained from $\I_i$ by contraction and truncation).
Therefore, by the matroid partition algorithm \cite{Edmonds65}, we can decide whether $X'\cup V\in \tilde{\I}_i$ using the oracle of $\I_i$.

Here we show a complexity analysis of the above algorithm with the implementations of matroid intersection and matroid partition algorithms by Cunningham \cite{Cun86}. 
Let 
$n=|E|$, 
$r = \max\{r_1(E),r_2(E)\}$, 
where $r_i$ denotes the rank function of $M_i$ for $i=1,2$, 
and 
let $\tau$ denote the time for the membership oracles of $\I_1$ and $\I_2$. 
Overall, 
we solve $k-1$ instances of the matroid intersection problem defined by $\tilde{M}_1$ and $\tilde{M}_2$. 
Each instance can be solved in $O(r^{1.5}n Q)$ time \cite{Cun86}, 
where $Q$ is the time for an independence test in $\tilde{M}_1$ and $\tilde{M}_2$. 
Testing independence in $\tilde{M}_1$ and $\tilde{M}_2$ can be done in $O(n^{2.5} \tau)$ time \cite{Cun86}. 
Therefore, the total time complexity is $O(k r^{1.5} n^{3.5} \tau)$.

\section{Example Incompatible with the G-polymatroid Approach}
\label{sec:incompatible}
As mentioned before, 
the class of strongly base orderable matroids admits a solution for Problem~\ref{prob:main} \cite{DM76}, 
and 
our g-polymatroid approach can deal with laminar matroids, 
a special class of strongly base oderable matroids. 
Hence we expect that the g-polymatroid approach can be applied to strongly base orderable matroids. 
In this subsection, 
however, we \change{show the limits of our method} 
by constructing an example of a transversal matroid, 
another simple special case of a strongly base orderable matroid, 
which admits 
no intersecting-paramodular pair required in Proposition~\ref{prop:g-matroid_approach}. 


For a bipartite graph $G=(E,F;A)$ with color classes $E$, $F$ and edge set $A$,
let $\I$ be a family of subsets $X$ of $E$ such that $G$ has a matching 
that is incident to all elements in $X$.
Then it is known that $(E,\I)$ is a matroid \cite{EF65}, 
which we call the {\em transversal matroid} induced from $G$.



\begin{example}\label{ex:1}
\change{
Let $E=\{e_{1},e_{2},e_{3}, e'_{1},e'_{2}, e'_{3}\}$ and
$G$ be a bipartite graph with color classes $E$ and $F$. 
Figure~\ref{fig1} depicts $G$, where the white and black nodes represent $E$ and $F$, respectively.
Let $M=(E, \I)$ be the matroid induced by $G$.
Observe that there exists a matching between $\{e'_{1},e'_{2},e'_{3}\}$ and $F$,
implying that 
the rank of $M$ is three.}
\begin{figure}[htbp]
\begin{center}
   \includegraphics[width=35mm]{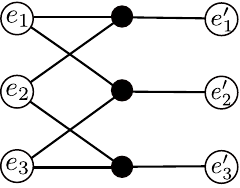}
\caption{A bipartite graph that induces a transversal matroid  $M=(E,\I)$ in Example~\ref{ex:1}}
\label{fig1}
\end{center}
\end{figure}
\end{example}

We show that transversal matroids are incompatible with
our g-polymatroid approach by proving that 
the transversal matroid $M$ in Example \ref{ex:1} admits 
no intersecting-paramodular pair $(p,b)$ satisfying 
$\F(p,b)=\set{X\subseteq E|X\in \I,~E\setminus X\in \I^{k-1}}$, 
i.e.,\ 
we cannot use Proposition~\ref{prop:g-matroid_approach} for $M$.
For this purpose, we prepare the following fact (see e.g., \cite{Frankbook, Mbook, MS99}).

\begin{lemma}\label{lem:g-matroid}
For any integral intersecting-paramodular pair $(p,b)$, 
if $\J:=\F(p,b)\neq \emptyset$, then $(E,\J)$ is a {\em generalized matroid}, i.e., 
$\J$ satisfies the following axioms:
\begin{itemize}
\setlength{\parskip}{0.0cm}
\setlength{\itemsep}{0.0mm}
\item[\rm (J1)] If $X, Y \in\J$ and $e \in Y\setminus X$, then $X +e \in \J$ or $\exists e'\in X\setminus Y: X+e-e'\in \J$.
\item[\rm (J2)] If $X, Y \in\J$ and $e \in Y\setminus X$, then $Y-e \in \J$ or $\exists e'\in X\setminus Y: Y-e+e'\in \J$.
\end{itemize}
\end{lemma}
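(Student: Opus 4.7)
The plan is to exploit the integral-polyhedral description of $\F(p,b)$ already established, reducing the axioms (J1) and (J2) to a polyhedral exchange argument driven by the cross inequality~\eqref{eq:cross-ineq}.

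By Theorems~\ref{thm:intersecting} and~\ref{thm:unit_cube}, the polyhedron $Q(p,b)\cap[0,1]^E$ is an integral g-polymatroid, and Lemma~\ref{lem:convex} identifies $\J=\F(p,b)$ with its $(0,1)$-vertices. To prove (J1), fix $X,Y\in\J$ and $e\in Y\setminus X$. If $\chi_{X+e}\in Q(p,b)$ then $X+e\in\J$ and we are done. Otherwise some upper bound is violated---the lower bounds cannot be, since $X\subseteq X+e$ and $X\in\J$---so there exists $A\ni e$ with $b(A)=|X\cap A|$. Choose such an $A$ that is \emph{minimal} with respect to inclusion. From $|Y\cap A|\le b(A)=|X\cap A|$ together with $e\in(Y\setminus X)\cap A$, there exists $e'\in(X\setminus Y)\cap A$, and I pick any such $e'$.

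It remains to verify $X+e-e'\in\J$, which amounts to checking two families of bounds. For any $B$ with $e\in B$ and $e'\notin B$ I need $b(B)>|X\cap B|$; if instead $b(B)=|X\cap B|$, the submodular inequality applied to $A$ and $B$ forces $A\cap B$ to be $b$-tight, and since $e\in A\cap B\subseteq A$ with $e'\in A\setminus B$ this contradicts the minimality of $A$. For any $B$ with $e'\in B$ and $e\notin B$ I need $p(B)<|X\cap B|$; if instead $p(B)=|X\cap B|$, the cross inequality~\eqref{eq:cross-ineq} applied to $A$ and $B$ forces $A\setminus B$ to be $b$-tight, and again $e\in A\setminus B\subsetneq A$ contradicts minimality. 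The axiom (J2) is handled dually: starting from a minimal $A\ni e$ with $p(A)=|Y\cap A|$, an identical argument with $p$ and $b$ swapped shows $Y-e+e'\in\J$ for some $e'\in(X\setminus Y)\cap A$.

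The main obstacle is precisely this case analysis: arranging that the minimality of the chosen tight set $A$ interacts correctly with the paramodularity of $(p,b)$ so that a single swap $e\leftrightarrow e'$ simultaneously restores every constraint. An alternative route, taken in the standard references \cite{Frankbook,Mbook,MS99}, is to appeal to Tardos's matroid embedding \cite[Theorem~2.9]{Tardos85} (recalled in Section~\ref{sec:complexity}) and deduce (J1) and (J2) from ordinary matroid base exchange after projection; but that embedding is typically established \emph{using} the g-matroid axioms, so a direct polyhedral derivation as above is preferable.
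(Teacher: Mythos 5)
The paper itself offers no proof of Lemma~\ref{lem:g-matroid}; it is quoted from the literature, so your argument has to stand on its own. Your overall strategy --- take a minimal $b$-tight set $A\ni e$, swap $e$ for some $e'\in(X\setminus Y)\cap A$, and uncross any newly violated constraint against $A$ --- is the standard one and is sound for a \emph{genuinely} paramodular pair. Your upper-bound case is indeed complete: if a $b$-tight $B$ has $e\in B$ and $e'\notin B$, then either $B\subsetneq A$ (contradicting minimality outright), or $A\subseteq B$ (impossible, as $e'\in A\setminus B$), or $A$ and $B$ are intersecting and submodularity applies. The gap is in the lower-bound case. There you apply the cross inequality~\eqref{eq:cross-ineq} to $(A,B)$ with $e'\in A\cap B$ and $e\in A\setminus B$; but when $B\subsetneq A$ these sets are \emph{not} intersecting ($B\setminus A=\emptyset$), and an intersecting-paramodular pair guarantees nothing for such a pair. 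This is not a removable technicality: your claim that \emph{any} $e'\in(X\setminus Y)\cap A$ works is false. Take $E=\{e,y,u,v\}$, $b(E)=2$, $p(\{y,u\})=1$, with $b=\infty$ and $p=-\infty$ on all other nonempty sets and both equal to $0$ on $\emptyset$; this pair is vacuously intersecting paramodular because no two sets carrying finite values form an intersecting pair. Then $X=\{u,v\}$ and $Y=\{e,y\}$ lie in $\F(p,b)$, the unique (hence minimal) $b$-tight set containing $e$ is $A=E$, and $(X\setminus Y)\cap A=\{u,v\}$; choosing $e'=u$ gives $X+e-u=\{e,v\}\notin\F(p,b)$, since it misses $\{y,u\}$. (Axiom (J1) still holds here because $e'=v$ works, but your proof supplies no mechanism for preferring $v$ to $u$.)

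The repair is essentially the reduction you dismiss in your last paragraph, and it is not circular. By Theorems~\ref{thm:intersecting} and~\ref{thm:unit_cube}, $Q(p,b)\cap[0,1]^E$ is an \emph{integral} g-polymatroid, i.e.\ it equals $Q(p',b')$ for a fully paramodular pair $(p',b')$, which may be taken integral and which has the same $(0,1)$-points, so $\F(p,b)=\F(p',b')$. Those theorems are established by polyhedral means (projection of a base polyhedron of a crossing-submodular function), not via the exchange axioms (J1)--(J2), so no circularity arises. Running your argument with $(p',b')$ closes the troublesome subcase $B\subsetneq A$: the cross inequality now yields $b'(A)-p'(B)\geq b'(A\setminus B)-p'(\emptyset)=b'(A\setminus B)$, forcing $A\setminus B$ to be $b'$-tight and contradicting the minimality of $A$. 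With that substitution made at the outset, your proof of (J1), and its dual for (J2), is correct for every admissible choice of $e'$.
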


Now the incompatibility of the transversal matroid $M$ in Example \ref{ex:1} is established by the following proposition.

\begin{proposition}
For the transversal matroid $M=(E,\I)$ given in Example~\ref{ex:1},
there is no integral intersecting-paramodular pair $(p,b)$ satisfying
$\F(p,b)=\set{X\subseteq E|X\in \I,~E\setminus X\in \I^{k-1}}$ with $k=2$, while $E$ can be partitioned into two independent sets in $M$.
\end{proposition}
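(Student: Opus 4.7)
The plan is to use Lemma~\ref{lem:g-matroid} contrapositively: if an integral intersecting-paramodular pair $(p,b)$ satisfied $\F(p,b)=\J$, then $(E,\J)$ would be a generalized matroid, i.e., $\J$ would obey the exchange axioms (J1) and (J2). Here, with $k=2$, we have $\I^{k-1}=\I$, so
\[
\J=\set{X\subseteq E|X\in\I,~E\setminus X\in\I}.
\]
It therefore suffices to exhibit explicit $X,Y\in\J$ and $e\in Y\setminus X$ that violate one of (J1), (J2). The side claim that $E$ is partitionable into two independent sets follows by displaying any single member of $\J$ directly from Figure~\ref{fig1}.

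I would next record a structural simplification: since $M$ has rank three and $|E|=6$, any $X\in\J$ must have $|X|=3$ (both $X$ and $E\setminus X$ are independent, so both are bases). In particular, for $X\in\J$ and $e\in E\setminus X$, the set $X+e$ has four elements and is dependent in $M$, so $X+e\notin\J$. Hence, given $X,Y\in\J$ and $e\in Y\setminus X$, axiom (J1) collapses to the existence of some $e'\in X\setminus Y$ with $X+e-e'\in\J$, and axiom (J2) collapses to the existence of some $e'\in X\setminus Y$ with $Y-e+e'\in\J$. Refuting one of these assertions for well-chosen $X,Y,e$ will complete the proof.

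The main obstacle is the explicit case analysis. One would enumerate the three-element subsets of $E$ whose complement is also independent in $M$---i.e., list $\J$ explicitly from the adjacency structure of the bipartite graph $G$---and then select $X,Y\in\J$ together with $e\in Y\setminus X$ so that each candidate replacement $X+e-e'$ (resp.\ $Y-e+e'$) is ruled out by a matching obstruction in $G$, either because the swapped set itself is dependent in $M$ or because its complement fails to admit a covering matching. The transversal structure should supply precisely the rigidity needed: certain exchanges permitted by $M$ alone are blocked when one simultaneously demands independence of the complement, and this rigidity is invisible to any intersecting-paramodular description of $\J$.
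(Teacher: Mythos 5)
Your overall strategy is exactly the one the paper uses: assume such a pair $(p,b)$ exists, invoke Lemma~\ref{lem:g-matroid} to conclude that $\J=\set{X\subseteq E| X\in\I,~E\setminus X\in\I}$ satisfies the exchange axioms, and then exhibit a concrete violation of (J1). Your preliminary observation that every member of $\J$ has size exactly~$3$ (both $X$ and $E\setminus X$ must be bases, since the rank is $3$ and $|E|=6$), so that $X+e\notin\J$ automatically and (J1) reduces to the swap alternative, is correct and matches the paper's step ``$X+e_{3}\not\in\J$''.

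However, the proof as written has a genuine gap: the decisive step --- producing explicit $X,Y\in\J$ and $e\in Y\setminus X$ for which every swap $X+e-e'$ with $e'\in X\setminus Y$ fails to lie in $\J$ --- is only announced, not carried out. The entire content of the proposition lives in that verification, since it depends on the particular adjacency structure of the bipartite graph in Figure~\ref{fig1}; a generic rank-$3$ transversal matroid on six elements whose ground set splits into two bases need not violate (J1), so no amount of general reasoning about ``rigidity'' substitutes for the explicit witnesses. For the record, the paper takes $X=\{e_{1},e'_{2},e'_{3}\}$ and $Y=\{e'_{1},e'_{2},e_{3}\}$ (both verified to lie in $\J$), applies (J1) with $e=e_{3}\in Y\setminus X$, notes $X\setminus Y=\{e_{1},e'_{3}\}$, and then checks from the graph that $X+e_{3}-e_{1}=\{e'_{2},e_{3},e'_{3}\}\notin\I$ while the complement of $X+e_{3}-e'_{3}$, namely $\{e'_{1},e_{2},e'_{3}\}$, is not in $\I$; this rules out both admissible swaps and yields the contradiction. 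Until you supply such a triple and the two matching obstructions, your argument shows only that \emph{if} a violating configuration exists then the proposition follows, not that one exists in this matroid.
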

\begin{proof}
The latter statement is obvious: $E$ can be partitioned into two bases 
$\{e_{1}, e_{2},e_{3}\}$ and $\{e'_{1},e'_{2},e'_{3}\}$. 

Suppose to the contrary that $\F(p,b)=\set{X\subseteq E|X\in \I,~E\setminus X\in \I}$ holds
for some integral intersecting-paramodular  pair $(p,b)$.
By Lemma~\ref{lem:g-matroid}, then $\J:=\F(p,b)=\set{X\subseteq E|X\in \I,~\overline{X}\in \I}$ 
satisfies (J1) and (J2), where $\overline{X}$ denotes $E\setminus X$.

Let $X:=\{e_{1}, e'_{2}, e'_{3}\}$ and 
$Y:=\{e'_{1}, e'_{2}, e_{3}\}$. 
We can observe $X, \overline{X}, Y, \overline{Y}\in \I$, 
and hence $X,Y\in \J$.
Apply (J1) to $X\in \J$ and $e_{3}\in Y\setminus X$. As $X+e_{3}\not \in \J$ and $X\setminus Y=\{e_{1},e'_{3}\}$, 
we must have $X+e_{3}-e_{1}\in \J$ or $X+e_{3}-e'_{3}\in \J$.
However, 
it holds that 
$X+e_{3}-e_{1}=\{e'_{2}, e_{3}, e'_{3}\}\not\in \I$,
implying  $X+e_{3}-e_{1}\not\in \J$, 
and 
it also holds that $\overline{X+e_{3}-e'_{3}}=\{e'_{1}, e_{2}, e'_{3}\}\not\in \I$, 
implying $X+e_{3}-e'_{3}\not\in \J$, a contradiction.
\end{proof}

\section*{Acknowledgments}
The authors would like to thank Satoru Fujishige and Kazuo Murota for valuable comments. 
The authors are also obliged to anonymous referees for helpful comments, 
in particular on a smaller example of a transversal matroid. 
The first author is partially supported by 
JST CREST Grant Number JPMJCR1402,  
JSPS KAKENHI Grant Numbers 
JP16K16012, 
JP26280004, 
Japan.
The second author is supported by
JST CREST Grant Number JPMJCR14D2, 
JSPS KAKENHI Grant Number JP18K18004, 
Japan.
\bibliographystyle{myplain}
\bibliography{myrefs}
\end{document}